\documentclass[11pt, letterpaper,leqno]{amsart}

\usepackage{amsmath,amsthm,amsrefs,amssymb,hyperref}
\usepackage[usenames,dvipsnames]{xcolor}
\usepackage{float}
\usepackage{todonotes}
\usepackage{fancyhdr}

\numberwithin{equation}{section}

\newtheorem{lemma}{Lemma}[section]
\newtheorem{theorem}{Theorem}[section]

\newtheorem{proposition}{Proposition}[section]

\theoremstyle{definition}

\newtheorem{innercustomgeneric}{\customgenericname}
\providecommand{\customgenericname}{}
\newcommand{\newcustomtheorem}[2]{%
	\newenvironment{#1}[1]
	{%
		\renewcommand\customgenericname{#2}%
		\renewcommand\theinnercustomgeneric{##1}%
		\innercustomgeneric
	}
	{\endinnercustomgeneric}
}

\newcustomtheorem{customthm}{Theorem}
\newcustomtheorem{customproposition}{Proposition}
\newcustomtheorem{customcorollary}{Corollary}
\newcustomtheorem{customlemma}{Lemma}

\DeclareMathOperator{\D}{\mathbb{D}}
\DeclareMathOperator{\N}{\mathbb{N}}
\DeclareMathOperator{\C}{\mathbb{C}}
\DeclareMathOperator{\R}{\mathbb{R}}
\DeclareMathOperator{\Ha}{\mathbb{H}}
\DeclareMathOperator{\Z}{\mathbb{Z}}
\DeclareMathOperator{\dist}{\mathrm{dist}}

\newcommand{\floor}[1]{\lfloor #1 \rfloor}

\begin{document}
	\title[On the Bergman number of hyperbolic domains]{On the Bergman number of hyperbolic domains}
	
	\author{Dimitrios Betsakos}  
	\address{Department of Mathematics, Aristotle University of Thessaloniki, 54124, Thessaloniki, Greece}
	\email{betsakos@math.auth.gr}  
	
	\author{Nikolaos Karamanlis}
	\address{Department of Mathematics, University of Thessaly, 3rd Km Old National Road Lamia-Athens, 35100, Lamia, Greece}
	\email{karamanlisn@gmail.com} 

	\subjclass[2020]{Primary 30H10, 30H20; Secondary 30F45, 37F99}
	
	\date{}
	\keywords{Hardy number, Bergman number, hyperbolic metric, hyperbolic geodesic}
	
	
	\begin{abstract} 
	We construct a domain $D$ in the plane whose Hardy and Bergman numbers satisfy $0<h(D)<b(D)<+\infty$. We also calculate the Bergman number for certain classes of domains having countable complement in the plane. Finally, we investigate some of the implications of our analysis in the theory of iteration of holomorphic self-maps of the unit disk. Our methods rely on estimates for the hyperbolic metric and a recent result \cite{CZRP} that connects the Bergman number with the hyperbolic metric.
	\end{abstract}
	
	\maketitle

\section{Introduction}
Although the theory of Bergman spaces has seen a remarkable development, concrete examples of holomorphic functions belonging to Bergman spaces are rare in the literature. Moreover, it remains difficult to decide whether a given holomorphic function belongs to a Bergman space or not. These problems
led to the recent introduction of the notion of the Bergman number of a planar domain, in analogy to the older notion of the Hardy number. 

\medskip

The Hardy space $H^p$ for $0<p<\infty$ is the set of holomorphic functions $f$ defined on the unit disk $\D$ satisfying
\[
\sup_{0<r<1}\int_{0}^{2\pi}|f(re^{i\theta})|^pd\theta<+\infty.
\]
For an introduction to the theory of Hardy spaces, see \cite{Dur}. For a domain $D\subset\C$, we denote by $\rm{Hol}(\D,D)$ the set of holomorphic functions $f:\D\to D$. The Hardy number of a function $f\in\rm{Hol}(\D,\C)$ is defined as
\[
h(f):=\sup\left(\{p>0:\ f\in H^p\}\cup\{0\}\right)\in [0,+\infty].
\]
The Hardy number, $h(D)$,  of a domain $D$ was introduced by Hansen in \cite{Hans1} and is defined as
\begin{align*}
h(D):&=\sup\left(\{p>0:\ {\rm Hol}(\D, D)\subset H^p\}\cup\{0\}\right)\\
    & =\inf\{h(f):\ f\in \rm{Hol}(\D, D)\} \in [0,+\infty].
\end{align*}

A classical question is to describe or estimate the Hardy number of a planar domain in terms of its geometry. Ess\'{e}n \cite{Ess} obtained estimates for the Hardy number of a hyperbolic domain in terms of certain harmonic measures and the logarithmic capacity. Later, Kim and Sugawa \cite{KimSug}, using the work of Ess\'{e}n, proved a general formula for the Hardy number in terms of harmonic measure. Recently, Cruz-Zamorano and the first author \cite{BZ} characterized the Hardy number of Greenian domains in terms of the Green function. The Hardy number has also been studied within certain geometric classes of domains, e.g., starlike domains \cite{Hans1}, Koenigs domains \cite{CZKR}. The relation of the Hardy number with the exit time of Brownian motion is explored in \cite{MM}. 

Closely related to Hardy spaces are the Bergman spaces. The weighted Bergman space with exponent $p>0$ and weight $\alpha>-1$ is denoted by $A_\alpha^p$ and consists of all holomorphic functions $f$ on $\D$ such that
\[
\int_{\D}|f(z)|^p\left(1-|z|\right)^{\alpha}dA(z)<+\infty,
\]
where $dA$ denotes Lebesgue area measure on $\D$. For an exposition regarding these spaces, see \cite{DurSch}.

In \cite{BKK}, Karafyllia and the authors  characterized the conformal maps which are in $A_\alpha^p$ in terms of certain hyperbolic distances.
The quantity
\[
b(f):=\sup\left(\bigg\{\frac{p}{\alpha+2}:\ p>0,\ \alpha>-1,\ f\in A_\alpha^p\bigg\}\cup\{0\}\right)\in [0,+\infty]
\]
is called the Bergman number of a holomorphic function $f$, and it was introduced by Karafyllia and  the second author   in \cite{KrfKrm} where it was proved that the Hardy and Bergman numbers coincide for conformal maps on $\D$. Later, Karafyllia, in \cite{Krf3}, introduced the Bergman number of a domain $D\subset\C$:
\[
b(D):= \inf\{b(f):\ f\in \rm{Hol}(\D, D)\} \in [0,+\infty].
\]
It is known (see \cite[Lemma 2.1]{KimSug}) that the Hardy number of a hyperbolic domain $D\subset\C$ is equal to the Hardy number of a universal covering map from $\D$ onto $D$. The same was shown to be true for the Bergman number of $D$ in \cite[Lemma 2.13]{BZ}. In the case that $D\neq\C$ is a simply connected domain, the above assertions are true if we replace the universal covering map with a Riemann map from $\D$ onto $D$. 

The work in \cite[Theorem 1.3]{KrfKrm} therefore implies that for a simply connected domain $D$, we have $h(D)=b(D)$. If $D=\C$, then both numbers are $0$; see \cite[Lemma 2.2]{Krf3}. In particular, for simply connected $D\neq\C$, we have
\[
b(D)=h(D)=\liminf_{R\to \infty} \frac{\rho_D(0,C_R)}{\log R},
\]
where $C_R=\partial D(0,R)\cap D$ and $\rho_D$ denotes the hyperbolic distance in $D$. As usual, $D(0,r)$ is the open disk centered at $0$ of radius $r$. Recently, Contreras, Cruz-Zamorano, and Rodr\'{i}guez-Piazza \cite[Theorem 1.2]{CZRP} have extended this identity for the Bergman number to multiply connected hyperbolic domains. Their proof relies on the link between the Bergman number and some related quantities called growth numbers. They have proved that if $D$ is an unbounded hyperbolic domain in $\C$ which contains $0$, then
\begin{equation}\label{bergmanid}
	b(D)=\liminf_{R\to +\infty}\frac{\rho_D(0,C_R)}{\log R}.
\end{equation}
The normalization $0\in D$ is not essential.

In general, for a multiply connected domain $D$, it holds that $h(D)\leq b(D)$. A proof can be found in \cite[Lemma 2.2]{Krf3}. Cruz-Zamorano  and the first author constructed examples of domains $D$ in \cite{BZ} for which it holds that $0<h(D)<b(D)=+\infty$. In addition, they proved that equality of the two numbers holds within a certain class of multiply connected domains. It is an open question to determine conditions on a domain $D$ which are necessary and sufficient for the equality $b(D)=h(D)$ to be true. The authors in \cite{BZ}, among other questions, also asked whether there exists a domain $D$ for which $0\leq h(D)<b(D)<+\infty$. The authors in \cite[Corollary 7.1]{CZRP} have given an affirmative answer to this question by providing examples of domains $D$ satisfying $0=h(D)<b(D)<+\infty$. These domains are of the form $\C\setminus\big(\{0\}\cup\{-\gamma^n:\ n\in\Z\}\big)$ for $\gamma>1$. We give an affirmative answer to this question as well by using different methods. Our result shows that it is possible to have a positive Hardy number.
\begin{theorem}\label{Theorem1}
There exists a planar domain $D$ satisfying $0<h(D)<b(D)<+\infty$.
\end{theorem}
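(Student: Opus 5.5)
We will build $D$ by removing from a sector a sparse set of points on its bisector. Fix $0<\beta<\pi$ and let $S=\{z:|\arg z|<\beta/2\}$. Set
\[
D=S\setminus E,\qquad E=\{x_n\}_{n\ge1}\subset(1,\infty),\qquad x_n=e^{\lambda^n},
\]
with some $\lambda>1$ still to be chosen. Since $D\subset S$ and $S$ is simply connected with $h(S)=\pi/\beta$, monotonicity gives $h(D)\ge 0$ at once. The plan is to prove three things. First, $h(D)\le$ some $c$ strictly smaller than the Bergman number. Second, $h(D)>0$. Third, $b(D)$, computed from \eqref{bergmanid}, is finite.

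\textbf{Bergman number.} We use \eqref{bergmanid} with base point $1/2\in D$. Since $D\subset S$, we have $\rho_D\ge\rho_S$, so
\[
b(D)\ge\liminf_{R\to\infty}\frac{\rho_S(1/2,C_R)}{\log R}=\frac{\pi}{\beta}.
\]
For the upper bound, take radii $R$ with $\log R$ halfway between $\lambda^n$ and $\lambda^{n+1}$. These points are far, in the logarithmic scale, from the punctures. We join $1/2$ to $C_R$ by a path running inside $S$ along a ray slightly off the bisector. Near each puncture $x_k$ the density of $\rho_D$ exceeds that of $\rho_S$ only inside an annulus $\{|z-x_k|<\epsilon x_k\}$, and the path avoids these annuli. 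So $\rho_D(1/2,C_R)\le(\pi/\beta+o(1))\log R$, giving $b(D)=\pi/\beta<\infty$. If this comparison turns out to be delicate, a cruder finite upper bound, together with the strict inequality proved below, is all we need.

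\textbf{Hardy number.} We use the Kim--Sugawa harmonic measure formula, or the Green-function characterization of \cite{BZ}: $h(D)$ is the supremum of $p$ for which $\omega(0,D\cap\{|z|=R\},D)=O(R^{-p})$ fails to be summable in the appropriate sense. Equivalently, by \cite[Lemma 2.1]{KimSug}, $h(D)$ equals $h$ of the universal covering map. The key mechanism is that for the covering map $f$, the set where $|f|$ is large is controlled by the punctures. The universal covering lifts paths that wind around a puncture $x_n$. Near $x_n$ the covering behaves like $\exp$ of a half-plane, i.e. like $x_n+x_n e^{-w}$ locally, so it produces large values with only polynomial cost in hyperbolic terms. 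Concretely, we estimate the probability (harmonic measure in $\D$) that $|f|>x_n$ from below. A Brownian path reaching a neighbourhood of $x_n$ in $S$ has probability about $x_n^{-\pi/\beta}$, while in the cover the lifted sheets near $x_n$ accumulate. This should give $h(D)\le\theta\,\pi/\beta$ for some $\theta<1$ depending on $\lambda$.

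An alternative, and likely cleaner, route is via \cite{BZ}. There $h(D)$ is expressed through Green's function $g_D(z,1/2)$ and the size of $\{|z|>R\}$ in terms of $\int_{C_R} g_D$-type quantities. Punctures make $g_D$ vanish at $x_n$, but they do not change the harmonic measure of $C_R$. We would then need Hardy-number estimates in which sparse punctures do matter, namely through $\log$-singular behaviour on hyperbolic geodesics.

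For positivity, we compare $D$ with the subdomain $\Omega=S\setminus[1,\infty)$. This domain is simply connected, and $D\supset\Omega$ is the wrong direction for monotonicity. Instead we cover $D$ by $\C\setminus\{0,x_1\}$-type domains, or use Ess\'en's estimate: the set $E\cup\C\setminus S$ is uniformly perfect-like at large scales, so the hyperbolic density satisfies $\lambda_D(z)\ge c/(|z|\log|z|)$. Integrating this bound should yield $h(D)>0$.

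The main obstacle is the strict inequality $h(D)<\pi/\beta$. That is, we must show that sparse punctures genuinely lower the Hardy number while leaving the Bergman number, which is governed by $\rho_D$ to the circles $C_R$, unchanged. Our first attempt will be an explicit lower bound for $\int|f|^p$ of the covering map. We will use the lifts of small loops around $x_n$, which give $\asymp$ (number of sheets) $\cdot x_n^{p}\cdot x_n^{-\pi/\beta}\cdot(\text{local measure})$. We will then tune $\lambda$ so that this diverges for some $p<\pi/\beta$.
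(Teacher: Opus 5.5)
\textbf{There is a genuine gap: the construction cannot work.} The inequality you call the main obstacle, $h(D)<\pi/\beta$, is false for your domain.

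Since $D\subset S$, every $f\in\mathrm{Hol}(\D,D)$ also belongs to $\mathrm{Hol}(\D,S)$. So monotonicity gives $h(D)\ge h(S)=\pi/\beta$, not merely $h(D)\ge 0$. Combine this with the general inequality $h(D)\le b(D)$ and with your own computation $b(D)=\pi/\beta$. That computation is essentially correct: a path along the bisector that detours around each puncture $x_n=e^{\lambda^n}$ costs only $O(1)$ extra per puncture, and there are $O(\log\log R)$ punctures below $R$. Together these force $h(D)=b(D)=\pi/\beta$.

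The heuristic in your Hardy-number section is backwards. Removing a closed polar set never changes the Hardy number: $h(S\setminus E)=h(S)$ by \cite[Theorem 2.4(i)]{KimSug}. This is because $h$ is governed by harmonic measure, which does not see punctures, as you yourself remark. The sheets of the covering map around $x_n$ therefore cannot make $\int|f|^p$ diverge for any $p<\pi/\beta$. Your discussion of positivity is also moot, since $h(D)\ge \pi/\beta$ is immediate. No choice of $\lambda$ helps: sparse punctures leave both numbers equal to $\pi/\beta$.

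\textbf{What the paper does instead.} Punctures can raise the Bergman number, but only when they occur at every scale. This is Theorem \ref{Theorem2}. Suppose the complement contains points $z_j$ with $r_{j+1}/r_j\le M$.
\begin{itemize}
\item After circular symmetrization (Theorem \ref{TheoremC}), the segment $[0,R]$ realizes $\rho_{\Omega^*}(0,C_R)$.
\item On that segment, the Beardon--Pommerenke bound (Theorem \ref{TheoremA}) with $\beta_{\Omega^*}\le\log(2M)$ gives $\lambda_{\Omega^*}\ge\tilde C/\delta_{\Omega^*}$.
\item Then \eqref{bergmanid} yields $b\ge\tilde C>0$.
\end{itemize}
The paper takes the plane punctured along a geometric sequence of ratio $2$, whose Bergman number is therefore positive, and chooses $p$ strictly below it. It then intersects with the domain $\Omega$ of \cite[Theorem 5.2]{CZKR}, which has $h(\Omega)=p$. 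For the intersection:
\begin{itemize}
\item polar removal gives $h=p$;
\item inclusion gives $b>p$;
\item $b<\infty$ follows from Lemma \ref{L1}, where the punctures are thickened into small arcs and $\lambda\le 2/\delta$ is integrated over $[R_n,R_{n+1}]$.
\end{itemize}

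So the roles are the opposite of those in your plan. The Hardy number is fixed by a domain that punctures cannot affect, and the punctures push $b$ above it. In your sector setting you would have to use punctures of bounded ratio and prove that they raise $b$ strictly above $\pi/\beta$. Theorem \ref{Theorem2} gives no such quantitative statement. Choosing $\Omega$ with a prescribed small positive Hardy number is what lets the paper avoid that step.
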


In order to prove Theorem \ref{Theorem1}, we first prove the following result, which is of independent interest. The requirement $0\in D$ is not essential, but it is convenient. 
\begin{theorem}\label{Theorem2}
Let $D\subset\C$ be a domain containing 0. Suppose that there exists a sequence $z_j\in \C\setminus D$, $j\in\N$, satisfying the following:\\
(i) the sequence $r_j=|z_j|$ is strictly increasing to infinity,\\
(ii) there exists $M>0$ so that $\frac{r_{j+1}}{r_j}\leq M$, for all $j$.\\
Then there is a constant $C=C(M)$ which depends only on $M$ so that $b(D)\geq C>0$.
\end{theorem}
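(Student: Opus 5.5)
We use the identity \eqref{bergmanid} of \cite{CZRP}: it suffices to find $C=C(M)>0$ such that $\rho_D(0,C_R)\ge C\log R - O(1)$ as $R\to\infty$. By monotonicity of the hyperbolic metric under inclusion, $\rho_D\ge\rho_{\Omega}$ where $\Omega=\C\setminus E$, $E=\{z_j:j\in\N\}$ (note $0\in\Omega$ since $0\in D$; we may also add the point $z_0$ of smallest modulus). So it is enough to bound from below the hyperbolic density $\lambda_\Omega$ along any curve from $0$ to $\{|z|=R\}$.

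The key step is a pointwise lower bound of the form
\[
\lambda_\Omega(z)\ \ge\ \frac{c(M)}{|z|}\qquad\text{for } |z|\ge r_1,
\]
where $\lambda_\Omega$ is the density with respect to $|dz|$. Given $z$ with $r_j\le |z|< r_{j+1}$, the punctured plane $\C\setminus\{z_j,z_{j+1}\}$ contains $\Omega$, and after the affine map $w\mapsto (w-z_j)/(z_{j+1}-z_j)$ it becomes $\C\setminus\{0,1\}$. Hence $\lambda_\Omega(z)\ge \lambda_{0,1}(\zeta)/|z_{j+1}-z_j|$ with $\zeta=(z-z_j)/(z_{j+1}-z_j)$. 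The standard estimate $\lambda_{0,1}(\zeta)\ge \dfrac{c}{|\zeta|\,(1+|\log|\zeta||)}$ loses a logarithm, which is harmful, so instead I would use a better choice: three omitted points. Since the points are not localized in argument, the cleaner approach is a scaling/normal-family argument. Consider the rescaled domains $\Omega_z=\{w: |z|w\notin E\}$ with $|w_0|=1$ ($w_0=z/|z|$). Each $\Omega_z^c$ contains a point in every annulus $\{\rho\le|w|\le M\rho\}$ for $\rho$ in a range containing $[1/M^2,M^2]$, in particular a point in $\{1/M\le|w|\le 1\}$ and one in $\{M\le|w|\le M^2\}$, and also $0$ is not needed. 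For any closed set $F$ containing two points $a,b$ with $1/M\le|a|\le1$, $M\le |b|\le M^2$, the point $w_0$ lies at Euclidean distance at most $M^2+1$ from both and $|a-b|\ge M-1$; the density of $\C\setminus\{a,b\}$ at $w_0$ is then bounded below by a constant depending only on $M$ by compactness (the configurations $(a,b,w_0)$ with $w_0\ne a,b$ range over a set on which $\lambda_{\C\setminus\{a,b\}}(w_0)$ is continuous and positive; care is needed as $w_0\to a$, but there the density blows up, so a lower bound persists). Scaling back gives $\lambda_\Omega(z)\ge c(M)/|z|$. (If $M\le1$ is impossible by (i), and if $M$ close to $1$ one uses $r_{j+2}/r_j$ instead, so assume WLOG $M\ge 2$ by replacing $M$ with $\max(M,2)$.)

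Then for any curve $\gamma$ from $0$ to $\{|z|=R\}$, the portion in $\{r_1\le|z|\le R\}$ has hyperbolic length at least $c(M)\int_{r_1}^R dr/r=c(M)\log(R/r_1)$. Thus $\rho_D(0,C_R)\ge c(M)\log R-c(M)\log r_1$, and \eqref{bergmanid} gives $b(D)\ge c(M)=:C$. The main obstacle is making the compactness bound for the twice-punctured-plane density rigorous and uniform; I would handle it by normalizing $a=0$ via translation only after fixing the configuration and invoking the explicit comparison $\lambda_{\C\setminus\{0,1\}}(\zeta)\ge c/(|\zeta|(1+|\log|\zeta||))$, which is bounded below by a positive constant on the bounded region $|\zeta|\le (M^2+1)/(M-1)$, exactly what is needed.
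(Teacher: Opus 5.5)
Your proof is correct, and it takes a different route from the paper's.

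\textbf{The paper's route.} It replaces $\Omega=\C\setminus\{z_j\}$ by its circular symmetrization $\Omega^*=\C\setminus\{-r_j\}$ (Theorem~C). It then uses the minimal-geodesic argument of \cite[Lemma 4.4]{CZ} to write $\rho_{\Omega^*}(0,C_R)=\ell_{\Omega^*}([0,R])$. Finally it bounds $\lambda_{\Omega^*}(t)\geq \tilde C/(t+r_1)$ on the positive axis with the Beardon--Pommerenke estimate (Theorem~A), controlling $\beta_{\Omega^*}$ by $\log(2M)$ through the punctures $-r_1$ and $-r_k$ with $r_k$ comparable to $t$.

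\textbf{Your route.} You skip symmetrization and never identify extremal curves. You prove $\lambda_\Omega(z)\geq c(M)/|z|$ at every point of $\Omega\cap\{|z|\geq r_1\}$ by comparing with $\C\setminus\{a,b\}$. Here $a,b$ are omitted points with moduli in $(|z|/M,|z|]$ and $[M|z|,M^2|z|]$, so they lie within distance $(M^2+1)|z|$ of $z$ and are at least $(M-1)|z|$ apart. Then every curve from $0$ to $C_R$ has $\Omega$-length at least $c(M)\log(R/r_1)$, simply because it must cross the annulus $r_1\leq |z|\leq R$ and $|dz|\geq d|z|$.

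\textbf{Comparison.} Both proofs rest on the same principle: two boundary points at the scale of $|z|$ force $\lambda\gtrsim 1/|z|$. Yours needs only monotonicity and the standard lower bound $\lambda_{\C\setminus\{0,1\}}(\zeta)\geq c/(|\zeta|(1+|\log|\zeta||))$, which makes it more elementary and self-contained. The paper's route yields the explicit constant $C/(k+\log 2M)$ and fits the symmetrization and minimal-geodesic framework used elsewhere in the paper. Your compactness step as written gives a much smaller constant, of order $M^{-3}(\log M)^{-1}$. If you keep track of $|w_0-a|$ in the Hempel-type bound, you recover order $1/\log M$; this is irrelevant for the statement anyway.

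\textbf{Two small presentational points.} First, when $|z|$ is close to $r_1$, the complement of $\Omega_z$ need not meet every annulus $\{\rho\leq |w|\leq M\rho\}$ with $\rho\in[1/M^2,1)$. The two points you actually use do always exist. If $r_j\leq |z|<r_{j+1}$, then $r_j/|z|\in(1/M,1]$. The first $r_k\geq M|z|$ has $k\geq 2$ (for $M\geq 2$) and satisfies $r_k\leq Mr_{k-1}<M^2|z|$. Second, \eqref{bergmanid} is stated for unbounded domains. So apply it to $\Omega$ and then use $b(D)\geq b(\Omega)$; for bounded $D$ the claim is trivial.
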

The proofs of Theorems \ref{Theorem1} and \ref{Theorem2} are in Section 3.

\medskip

Our next goal is to calculate the Bergman number for various domains. For $a\in (0,\pi)$, we let
\begin{equation}\label{sector}
	S_a=\{z:\ \Re z> |\Im z|\cot a\}
\end{equation}
be the sector of angle $2a$ with vertex at $0$, which is symmetric with respect to $\R$ and contains $1$. Let $E_a=(\Z+i\Z)\setminus\overline{S_a}$ be the part of the integer lattice which lies outside the closure of the sector $S_a$ and set $G_a=\C\setminus E_a$. For $a=\pi$, we set $G_\pi= \C\setminus\{-n:\ n\in\N\}$. We consider the family of domains $\mathcal{F}_1:=\{G_a:\ a\in (0,\pi]\}$. Note that by \cite[Theorem 2.4 (i)]{KimSug}, $h(G_a)=0$, for all $a\in (0,\pi]$. Using \eqref{bergmanid}, we can prove the following result.
\begin{theorem}\label{Theorem3}
The domains $G_a$ satisfy $b(G_a)=\frac{\pi}{2a}$, for all $a\in (0,\pi]$.
\end{theorem}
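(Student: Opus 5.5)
Since $0\in E_a$ for $a<\pi$ (the vertex is excluded from $\overline{S_a}$? actually $0\in\overline{S_a}$, so $0\notin E_a$), the point $0$ lies in $G_a$, and I will use \eqref{bergmanid}:
\[
b(G_a)=\liminf_{R\to\infty}\frac{\rho_{G_a}(0,C_R)}{\log R}.
\]
The plan is to show that this $\liminf$ equals $\pi/(2a)$ by proving matching upper and lower bounds. The heuristic is that far from the origin the lattice points are so dense, relative to their distance from $0$, that hyperbolically $G_a$ looks like the sector $S_a$. For the sector itself, the Riemann map $z\mapsto z^{\pi/(2a)}$ onto the right half-plane gives $\rho_{S_a}(1,C_R)\sim\frac{\pi}{2a}\log R$. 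This is the classical Hardy number $\pi/(2a)$ of a sector of opening $2a$.

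\emph{Upper bound.} I will connect $0$ to the point $R$ along the positive real axis, which stays inside $S_a\cup D(0,1)\subset G_a$. The density of $\rho_{G_a}$ is at most the density of $\rho_{S_a}$ on the part of the axis lying in $S_a$. That density is of order $\frac{\pi}{2a}\cdot\frac{1}{t}$ at the point $t$, because $\rho_{S_a}$ is the pullback of the half-plane metric under $z^{\pi/(2a)}$. Integrating from $1$ to $R$ gives $\rho_{G_a}(0,C_R)\le \frac{\pi}{2a}\log R+O(1)$. For $a=\pi$ I will use instead $\C\setminus(-\infty,0]$, which contains the real segment; its Riemann map $\sqrt z$ gives the exponent $1/2=\pi/(2\pi)$. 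Hence $b(G_a)\le\pi/(2a)$.

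\emph{Lower bound.} This is the main obstacle. Let $\gamma$ be a curve from $0$ to $C_R$. I will estimate its length from below with the standard bound $\lambda_{G}(z)\ge \frac{c}{\delta(z)\,(1+|\log\delta(z)|)}$, or better with a comparison in each dyadic annulus $A_k=\{2^k<|z|<2^{k+1}\}$. The tool is domain monotonicity for harmonic measure, or extremal length; I prefer extremal length. On the part of $A_k$ outside a slightly enlarged sector $S_{a+\epsilon}$, the complement $E_a$ is a $1$-net. Consequently any curve crossing $A_k$ in that region costs almost nothing, while a curve spending hyperbolic length in it gains nothing.

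The cleanest route I would try is the Ahlfors--Beurling estimate. First, I will bound the hyperbolic density from below by the density of the domain $\Omega=S_{a+\epsilon}\cup\{|z|<R_0\}\cup N$, where $N$ is a neighborhood of width about $1$ of the lattice region. The precise step is to show $\rho_{G_a}(0,C_R)\ge \rho_{S_{a+\epsilon}}(1,C_R)-o(\log R)$. The justification is that near points of $\overline{\C\setminus S_{a+\epsilon}}$ the density satisfies $\lambda_{G_a}(z)\gtrsim 1$, since the distance to $E_a$ is at most about $1$. That is much larger than the sector density $\approx 1/|z|$ near the sector's edge. A path exiting the sector therefore must either pay a large cost or stay within a bounded distance of $\partial S_{a+\epsilon}$.

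To make this rigorous, I would use the conformal map $w=\log z$. Under it the annuli become unit-length blocks of a strip of width $2(a+\epsilon)$. Outside that strip, the images of the lattice points form a set whose density is about $e^{\Re w}$ per unit area. I will then apply a Beurling-type lemma: the harmonic measure, in the half-strip with many punctures, of reaching far out is comparable to that in the plain strip, up to a factor of $e^{-(\pi/(2(a+\epsilon))-o(1))x}$. The point is that puncture sets with separation $e^{-x}$ in the $w$-plane are almost impenetrable, since a lattice of spacing $\delta$ blocks Brownian motion within distance $\gg\delta\log(1/\delta)$.

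Combining the bounds gives $\liminf\ge \pi/(2(a+\epsilon))$ for every $\epsilon>0$. With the upper bound this yields $b(G_a)=\pi/(2a)$. The case $a=\pi$ is handled similarly with the punctures $-n$, working in the slit plane.
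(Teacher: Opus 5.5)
Your upper bound is fine and is essentially the paper's: $S_a\subset G_a$ gives $b(G_a)\le b(S_a)=\pi/(2a)$, and integrating $\lambda_{S_a}$ along $[1,R]$ just makes this explicit.

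The lower bound has a genuine gap, and it is the entire content of the theorem. The main tools you propose for it cannot work. $E_a$ is countable, hence polar. Planar Brownian motion almost surely never hits it, and harmonic measure in a punctured strip equals that in the unpunctured strip. Deleting a countable set also does not change extremal distances. So the claim that a lattice of spacing $\delta$ ``blocks Brownian motion'' is false. Any Beurling, harmonic-measure or extremal-length argument sees $G_a$ exactly as it sees $\C$, and therefore yields only the bound $0$. This is precisely why $h(G_a)=0$ (Kim--Sugawa) while $b(G_a)>0$. Via \eqref{bergmanid}, the Bergman number is governed by the hyperbolic metric, which, unlike harmonic measure, does feel punctures.

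That leaves your inequality $\rho_{G_a}(0,C_R)\ge\rho_{S_{a+\epsilon}}(1,C_R)-o(\log R)$, which is only a restatement of what must be proved. Your justification controls only the part of a curve lying outside $S_{a+\epsilon}$, where indeed $\lambda_{G_a}\gtrsim 1$ far from $0$. For the part inside the sector you need $\lambda_{G_a}\ge(1-o(1))\lambda_{S}$ with the sharp constant. Domain monotonicity runs the wrong way there, since $G_a\not\subset S_{a+\epsilon}$. Theorem A (Beardon--Pommerenke) only gives a non-sharp bound $b(G_a)\ge c>0$, as in Theorem~\ref{Theorem2}.

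The missing idea is localization, in two steps.
\begin{itemize}
\item The paper first proves $\rho_{G_a}(t,\partial S_a)\to+\infty$ as $t\to+\infty$ (Lemma~\ref{L3}). It does this by comparing with the twice-punctured plane $\C\setminus\{a_t,a_t-1\}$, where distances grow like $\log\log$.
\item It then lifts $S_a$ into $\D$ through the covering map. The lift $V=q(S_a)$ contains ever larger hyperbolic neighbourhoods of the radius $[q(t),1)$. This gives $\rho_{G_a}(t,n)\ge(1+\epsilon)^{-1}\rho_{S_a}(t,n)$ along the real axis (Lemma~\ref{L5}). Pointwise, it amounts to $\lambda_{G_a}(z)\ge\tanh\bigl(\rho_{G_a}(z,\partial S_a)/2\bigr)\lambda_{S_a}(z)$.
\end{itemize}
Finally, an arbitrary curve from $0$ to $C_R$ must be reduced to the real axis. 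The paper does this with Minda's reflection principle (Theorem B) across $\R$, $i\R$, the line $y=x$ and the lines $y=k$, together with the minimality of real segments (Lemma~\ref{L4}). Your dyadic-annulus scheme supplies neither of these steps.
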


Finally, we provide a partial generalization of Theorem \ref{Theorem2}, which corresponds to the case $a=\pi$.

\begin{theorem}\label{Theorem4}
Let $r_n$, $n\in\N$, be a sequence of positive real numbers strictly increasing to infinity, satisfying $\sup_n |r_{n+1}-r_n|<+\infty$. Let $G=\C\setminus\{-r_j:\ j\in\N\}$. Then the domain $G$ satisfies $b(G)=1/2$.
\end{theorem}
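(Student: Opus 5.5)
We use the identity \eqref{bergmanid}, normalized at a point of $G$ such as $z_0=1$. Thus it suffices to show
\[
\liminf_{R\to\infty}\frac{\rho_G(1,C_R)}{\log R}=\frac12 .
\]
The model is the slit plane $\Omega=\C\setminus(-\infty,0]$, which is simply connected and contains $G$. Its hyperbolic distance from $1$ to $C_R\cap\Omega$ is attained at the point $R$, and equals $\tfrac12\log R+O(1)$. Indeed, $\sqrt z$ maps $\Omega$ onto the right half-plane, and there $\rho(1,\sqrt R)=\tfrac12\log\sqrt R\cdot 2=\tfrac12\log R$ in the normalization with curvature $-1$ and density $2/(1-|z|^2)$ on $\D$.

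\textbf{Upper bound.} Since $G\supset\Omega$, monotonicity of the hyperbolic metric gives $\rho_G\le\rho_\Omega$. Hence $\rho_G(1,C_R)\le\rho_\Omega(1,R)=\tfrac12\log R+O(1)$, so $b(G)\le 1/2$. This step is routine.

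\textbf{Lower bound.} We must show that any curve in $G$ from $1$ to $|z|=R$ has $G$-length at least $(\tfrac12-\varepsilon)\log R - C$. We use the standard lower estimate for the density of a plane minus a set with bounded gaps. If $d=\sup_n(r_{n+1}-r_n)<\infty$, then every point $z$ with $|z|$ large has a puncture $-r_j$ within distance about $\max(d,\dist(z,(-\infty,0]))$. Comparing with the twice-punctured plane (Beardon--Pommerenke type estimate) gives
\[
\lambda_G(z)\ge \frac{c}{\delta(z)\,\bigl(1+\log(\delta(z)^{-1}\cdot\text{scale})\bigr)},
\]
where $\delta(z)=\dist(z,\C\setminus G)$. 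This crude bound is not sharp enough to give exactly $1/2$, so we instead compare directly with $\Omega$.

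For $\varepsilon>0$, consider the region $U_\eta=\{z:\ |\arg z|<\pi-\eta\}$. For points in $U_\eta$ far from the origin, the punctures are relatively dense on the negative axis at scale $|z|$: the gaps are $O(1)$ while $\delta(z)\asymp|z|$. We then claim that $\lambda_G(z)/\lambda_\Omega(z)\to1$ as $|z|\to\infty$ uniformly in $U_\eta$. To prove this, rescale $G$ by $1/|z|$. The rescaled complements $\tfrac1{|z|}\{-r_j\}$ converge, in the sense of being $O(1/|z|)$-dense on $(-\infty,0]$, to the slit. Kernel/Carathéodory-type convergence of hyperbolic densities for such punctured domains then gives the claim. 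To make this rigorous, map $G$ into the quotient by the covering and use that the density of $\C$ minus a $\delta$-net of a segment converges to that of $\C$ minus the segment, away from the segment, as $\delta\to0$.

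A curve to $C_R$ must cross the annuli $\{R_0<|z|<R\}$. The part inside $U_\eta$ costs at least $(1-\varepsilon)$ times its $\Omega$-length. The part near the negative axis, inside the thin sector $|\arg z|\ge\pi-\eta$, needs a separate bound: there $\lambda_G(z)\gtrsim 1/(|z|\,\log|z|)$ only, which is too weak. We therefore use a projection argument instead. The $\Omega$-metric in polar form is $\lambda_\Omega\,|dz|\ge \dfrac{|d|z||}{2|z|\cos(\arg z/2)}\cdot(\dots)$. The radial component $\dfrac{|dr|}{2r\cos(\theta/2)}\ge\dfrac{|dr|}{2r}$ already yields $\tfrac12\log R$, and we need only an analogous bound $\lambda_G\ge(1-\varepsilon)/(2|z|)$ in the thin sector. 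This follows from comparing with $\C\setminus\{-r_j,-r_{j+1}\}$ at scale $|z|$ together with the $O(1)$ gaps. The \emph{main obstacle} is exactly this uniform density estimate $\lambda_G(z)\ge(1-\varepsilon)/(2|z|)$ for all large $|z|$, including near the punctures. Once it holds, integrating the radial part gives $\rho_G(1,C_R)\ge(1-\varepsilon)\tfrac12\log R-C_\varepsilon$, hence $b(G)\ge1/2$.
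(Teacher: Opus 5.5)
Your upper bound and your reduction of the lower bound to a pointwise estimate $\lambda_G(z)\ge(1-\varepsilon)/(2|z|)$ on $G\cap\{|z|\ge R_0\}$ are fine, and this is a different route from the paper's. The step you single out as the main obstacle, however, fails as written.

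\textbf{The thin-sector step.} In $\{|\arg z|\ge\pi-\eta\}$ you compare with $\C\setminus\{-r_j,-r_{j+1}\}$. For consecutive punctures, whose gap is at most $d=\sup_n(r_{n+1}-r_n)$, this gives only $\lambda_G(z)\gtrsim 1/(\delta(z)\log\delta(z))$. That is exactly the bound you discarded as too weak.

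The missing observation is that the right scale near the negative axis is $\delta(z)$, not $|z|$. There one has $\delta(z)\le|z|\sin\eta+d/2$. The bounded gaps also keep $\beta_G(z)$ in Theorem A bounded: if $a$ is a nearest puncture, there is a puncture $b$ to its left with $|b-a|\in[\delta(z),\delta(z)+d)$, so $\beta_G(z)\le\log(1+d/\delta(z))$. Theorem A then gives $\lambda_G(z)\ge c/\delta(z)$ with a universal $c>0$ when $\delta(z)\ge d$. When $\delta(z)<d$ it gives a positive lower bound depending only on $d$.

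So fix $\eta$ at the outset with $\sin\eta<2c$, independently of $\varepsilon$. Then $\lambda_G(z)\ge 1/(2|z|)$ throughout the thin sector for all large $|z|$. There is no logarithmic loss; the small universal constant is absorbed by making the sector thin. Only after this choice of $\eta$ should you use the comparison on $U_\eta$.

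\textbf{The comparison on $U_\eta$.} Replace the remark about the ``quotient by the covering'' with a precise argument. The rescaled domains $t^{-1}G$ converge to $\Omega=\C\setminus(-\infty,0]$ in the sense of Carath\'eodory kernel convergence with respect to $1$: every disk meeting $(-\infty,0]$ eventually contains a rescaled puncture, since the rescaled gaps are at most $d/t$. Hejhal's theorem on universal covering maps of variable regions then gives $\lambda_{t^{-1}G}\to\lambda_\Omega$ uniformly on the compact arc $\{|w|=1,\ |\arg w|\le\pi-\eta\}$. By scaling, this is your claim that $\lambda_G/\lambda_\Omega\to 1$ uniformly on $U_\eta$.

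Combined with $\lambda_\Omega(z)=1/\bigl(2|z|\cos(\tfrac12\arg z)\bigr)\ge 1/(2|z|)$, this yields $\lambda_G\ge(1-\varepsilon)/(2|z|)$ on $G\cap\{|z|\ge R_0\}$. Integrating $d|z|$ along any curve from $1$ to $C_R$ gives $\rho_G(1,C_R)\ge\frac{1-\varepsilon}{2}\log(R/R_0)$, as you intended.

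\textbf{Comparison with the paper.} With these repairs your proof is genuinely different from the paper's. The paper never estimates $\lambda_G$ off the positive axis. It shows that the univalent lift of $S=\C\setminus(-\infty,-r_1]$ into $\D$ is internally tangent to $\D$ at $1$ (Lemmas \ref{L2} and \ref{L5}, via an internal-tangency theorem from the literature). This gives $\rho_G(t,n)\ge\rho_S(t,n)/(1+\epsilon)$ along the axis. It also uses that real segments are minimal geodesics (Lemma \ref{L4}). Finally, it reduces $\rho_G(0,C_R)$ to $\rho_G(0,R)$ by Minda's reflection principle, which relies on the symmetry of $G$.

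Your pointwise bound removes the need for the geodesic lemma and the reflection step, and it does not use the symmetry of $G$ at all. The price is importing Hejhal's convergence theorem and the thin-sector estimate above. The paper's argument, in exchange, treats the whole family $\mathcal{F}$ uniformly and produces the axis estimate that is reused in Section 5.
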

We will denote by $\mathcal{F}_2$ the family of domains satisfying the hypotheses of Theorem \ref{Theorem4}. The proofs of Theorems \ref{Theorem3} and \ref{Theorem4} are in Section 4. 

\medskip

The domains of the class $\mathcal{F}_1$ are interesting from the point of view of the theory of iteration of holomorphic functions $\phi:\D\to\D$. 
In Section 5, we present a brief introduction to this theory. We prove estimates for the hyperbolic metric on domains in the class $\mathcal{F}_1$ and show that these estimates have consequences in iteration theory. 

\medskip

\textbf{Acknowledgments.} We are grateful to F. J. Cruz-Zamorano for carefully reading our paper and providing many helpful suggestions and corrections.


\section{Preliminaries}

The hyperbolic metric in the unit disk  $\D$ is defined as $\lambda_{\D} (z)|dz|$, where
\begin{equation}
\lambda_{\D} (z)=\frac{2}{1-|z|^2},\ z\in\D.
\end{equation}
The hyperbolic length of a piecewise $C^1$ curve $\gamma:[a,b]\to\D$ is the integral
\begin{equation}\label{length1}
\ell_{\D}(\gamma)=\int_{\gamma}\lambda_{\D}(z)|dz|.
\end{equation}
The hyperbolic distance between two points $a,b\in\D$ is defined as
\[
\rho_{\D}(a,b)=\inf_{\gamma}\ell_{\D}(\gamma),
\]
where the infimum is taken over all piecewise $C^1$ curves $\gamma$ joining $a$ to $b$ in $\D$. One can show that the hyperbolic metric is invariant under automorphisms of the unit disk, and this fact enables us to compute the hyperbolic distance explicitly in $\D$:
\begin{equation}\label{hypdisk}
\rho_{\D}(a,b)=\log\frac{1+|T(a,b)|}{1-|T(a,b)|},
\end{equation}
where $T(a,b)=\frac{a-b}{1-\bar{b}a}$, $a,b\in\D$.

A curve $\gamma: I\to\D$, where $I$ is some interval, is called a hyperbolic geodesic, or simply a geodesic, in $\D$ if for any $t_1\leq t_2$ in $I$ we have
\[
\rho_{\D}(\gamma(t_1),\gamma(t_2))=\ell_{\D}(\gamma\vert_{[t_1,t_2]}).
\]
The  geodesics in the unit disk are arcs of circles or lines which are perpendicular to the unit circle. 

If a domain $D\subset\C$ is hyperbolic, i.e., it has the property that $\C\setminus D$ contains at least two points, then there exists a holomorphic function $\pi:\D\to D$ onto $D$ which is a local biholomorphism, and it is unique up to composition with automorphisms of the unit disk. Such a function is called a universal covering map. In the case that $D$ is simply connected, a universal covering map is a usual Riemann map. Universal covering maps can be used to define the hyperbolic metric on any hyperbolic domain. If $D$ is hyperbolic, then there exists a unique metric $\lambda_D(w)|dw|$ on $D$ such that
\begin{equation}\label{density}
\lambda_D(\pi(z))|\pi'(z)|=\lambda_{\D}(z),\ z\in\D,
\end{equation}
for any choice of a universal covering map $\pi$. See \cite[Theorem 10.3]{BM}. This metric is called the hyperbolic metric in $D$. Just like in $\D$, we define the hyperbolic length of a piecewise $C^1$ curve $\gamma$ in $D$ by
\begin{equation}\label{length2}
\ell_D(\gamma)=\int_{\gamma}\lambda_D(w)|dw|
\end{equation}
and the hyperbolic distance between two points $a,b\in D$ by 
\begin{equation}\label{hdmultiply}
	\rho_D(a,b)=\inf_{\gamma}\ell_D(\gamma),
\end{equation}
where the infimum is taken over all piecewise $C^1$ curves joining $a$ to $b$ in $D$.

By a version of the Schwarz-Pick lemma (see \cite[Theorem 10.5]{BM}), if $\pi:\D\to D$ is a universal covering map, then
\begin{equation}\label{contraction}
	\rho_D(\pi(z),\pi(w))\leq \rho_{\D}(z,w), 
\end{equation}
for all $z,w\in\D$. Moreover, the same result implies that the hyperbolic distance satisfies a domain monotonicity property. Namely, if $D_1\subset D_2$ are hyperbolic domains, then $\lambda_{D_2}(z)\leq \lambda_{D_1}(z)$ for all $z\in D_1$, which in turn gives $\rho_{D_2}(z,w)\leq \rho_{D_1}(z,w)$, for all $z,w\in D_1$.

A curve $\sigma$ is said to be a (hyperbolic) geodesic in $D$ if $\sigma$ is a lift of a geodesic in $\D$, i.e., $\pi(\sigma)=\gamma$, for some  geodesic $\gamma$ in $\D$. By \eqref{density}, \eqref{length1},  \eqref{length2}, and a change of variable in the integral, it follows that $\ell_{\D}(\gamma)=\ell_D(\sigma)$ for every lift $\sigma$ of $\gamma$. When $D$ is multiply connected, it is known that geodesics in $D$ might not be minimizers for \eqref{hdmultiply}. See \cite[Proposition 1.9.30]{Abate}. For this reason, we will say that a geodesic $\gamma$ which realizes the infimum in \eqref{hdmultiply} is minimal in $D$. Furthermore, by \cite[Theorem 14.12 (iii), (iv)]{Mar} and the uniqueness of geodesics in $\D$, it follows that any two geodesics in a multiply connected domain are not homotopic.

We will need some estimates for the hyperbolic metric. For a domain $D$, let $\delta_D(z)=\dist(z,\partial D)$, $z\in D$, be the radius of the largest open disk centered at $z$ which is contained in $D$. If $D$ is hyperbolic, then by the domain monotonicity property we have
\begin{equation}\label{upperlambda}
	\lambda_D(z)\leq\frac{2}{\delta_D(z)}, \ z\in D.
\end{equation}
By Koebe's Theorem, when $D$ is simply connected, we also have
\[
\lambda_D(z)\geq\frac{1}{2\delta_D(z)}, \ z\in D.
\]
We will need the following lower bound for $\lambda_D$ proved by Beardon and Pommerenke in \cite{BP}, which applies to multiply connected hyperbolic domains $D$. For $z\in D$, consider the quantity
\begin{equation}\label{beta}
\beta_D(z):=\inf\Biggl\{\Bigg|\log\bigg|\frac{z-a}{b-a}\bigg|\Bigg|:\ a, b\in\partial D,\ |z-a|=\delta_D(z)\Biggr\}.
\end{equation}

\begin{customthm}{A}\label{TheoremA}
\textit{
Let $D$ be a hyperbolic domain in $\C$. There exist universal constants $C,k>0$ such that for all $z\in D$,
\begin{equation}
	\lambda_D(z)\geq \frac{C}{\delta_D(z)\left(k+\beta_D(z)\right)}.
\end{equation}
}
\end{customthm}

We will also need the following reflection principle due to Minda \cite[Theorem 3]{Minda}. We will denote by $R(A,L)$ the reflection of a set $A$ about the line $L$.

\begin{customthm}{B}\label{TheoremB}
	\textit{
		Let $D$ be a hyperbolic domain in $\C$. Suppose that $L$ is a line with $D\cap L\neq \emptyset$. Let $L^{-}$ and $L^{+}$ denote the two half planes determined by $L$ and set
		\[
		D^{-}=D\cap L^{-}\ \textrm{and}\ D^{+}=D\cap L^{+}.
		\] 
		If $D^{-}\neq\emptyset$ and $R(D^{-}, L)\subset D^{+}$, then for any piecewise $C^1$ curve $\gamma$ in $D^{-}$ we have that $\ell_D(\gamma)\geq \ell_D\left(R(\gamma, L)\right)$. Furthermore, equality holds if and only if $D$ is symmetric with respect to $L$.
	}
\end{customthm}

We will also use the behavior of the hyperbolic metric under circular symmetrization. Suppose that $D$ is a hyperbolic domain in $\C$. For  $r\geq 0$, set $T_r=\{z:|z|=r\}$ and denote by $|D\cap T_r|$ the angular Lebesgue measure of $D\cap T_r$. 
The circular symmetrization of $D$ with respect to the
positive semi-axis is
\begin{equation}
\label{circ}
D^*= \{z=re^{i\theta}: D\cap T_r \neq \varnothing,\, 2|\theta | <|D\cap T_r| \}
\cup \{-r:D\cap T_r=T_r\}.
\end{equation}

A. Weitsman \cite{Weitsman}, \cite[page 698]{Hay} proved that circular symmetrization reduces the hyperbolic metric in the following sense:

\begin{customthm}{C}\label{TheoremC}
	\textit{
		Let $D$ be a hyperbolic domain in $\C$. For every $z\in D$, 	$\lambda_D(z)\geq \lambda_{D^*}(|z|)$.
	}
\end{customthm}


\section{Proofs of Theorems \ref{Theorem1}, \ref{Theorem2}}
We begin with the proof of Theorem \ref{Theorem2}.

\noindent\textit{Proof of Theorem \ref{Theorem2}.}
Let $D\subset\C$ be a domain containing $0$ and suppose that $\{z_j,\ j\in\N\}$ is a sequence satisfying the two properties of Theorem \ref{Theorem2}. Let $T=\{z_j, j\in\N\}$ and set $\Omega=\C\setminus T$. Note that $D$ is a hyperbolic domain contained in $\Omega$ and thus by the definition of the Bergman number, $b(D)\geq b(\Omega)$. 

Let $R>0$ and denote by $C_R=\partial D(0,R)\cap \Omega$ the part of the circle centered at $0$ of radius $R$ which is contained in $\Omega$. Recall that $\rho_\Omega$ denotes the hyperbolic distance in $\Omega$.  Let $\Omega^*$ denote the circular symmetrization of $\Omega$ given by \eqref{circ}. An application of Theorem \ref{TheoremC} shows that $\rho_\Omega(0,C_R)\geq \rho_{\Omega^*}(0,C_R)$. In view of \eqref{bergmanid}, we infer that $b(\Omega)\geq b(\Omega^*)$. Therefore, it suffices to show the required estimate for $b(\Omega^*)$.

By an argument identical to the proof of \cite[Lemma 4.4]{CZ}, it follows that $\rho_{\Omega^*}(0,C_R)=\ell_{\Omega^*}\left([0,R]\right)$. Then by \eqref{length2},
\begin{equation}\label{P1Eq0}
	\rho_{\Omega^*}(0,C_R)=\ell_{\Omega^*}\left([0,R]\right)=\int_{[0,R]}\lambda_{\Omega^*}(z)|dz|.
\end{equation}

We shall now bound $\lambda_{\Omega^*}$. Since the Bergman number is left unchanged by applying translations to the domain, we may assume that $r_1=1$. Observe that for all $z\in [0,R]$, we have
\begin{equation}\label{P1dist}
\delta_{\Omega^*}(z)=|z+r_1|=|z+1|.
\end{equation}
Moreover, for each $z\in[0,R]$, we may find some $k\in\N$ with $r_k\leq 1+\delta_{\Omega^*}(z)<r_{k+1}$ because $r_j$ is increasing. Now \eqref{beta} gives
\begin{equation}\label{P1Eq1}
\beta_{\Omega^*}(z)\leq \left\lvert \log \frac{\delta_{\Omega^*}(z)}{r_k-1}\right\rvert\leq \log\frac{r_{k+1}-1}{r_k-1}.
\end{equation}
Since $\frac{r_{j+1}}{r_j}\leq M$ for all $j$, it follows that if $\delta_{\Omega^*}(z)$ is sufficiently large, then 
\[
\frac{r_{k+1}-1}{r_k-1}\leq \frac{Mr_k-1}{r_k-1}\leq M+\frac{M-1}{r_k-1}\leq 2M.
\]
Theorem \ref{TheoremA} together with the previous estimate and \eqref{P1Eq1} implies
\begin{equation}\label{P1Eq2}
\lambda_{\Omega^*}(z)\geq \frac{C}{(k+\beta_{\Omega^*}(z))}\frac{1}{\delta_{\Omega^*}(z)}\geq \frac{\tilde{C}}{\delta_{\Omega^*}(z)},
\end{equation}
for all $z\in [0,R]$ of sufficiently large modulus, say $|z|>R_0$, for some large $R_0>0$. Here we set $\tilde{C}=\frac{C}{k+\log (2M)}$.
Finally, using \eqref{bergmanid}, \eqref{P1Eq0}, \eqref{P1dist}, and \eqref{P1Eq2}, we calculate
\begin{align*}
b(\Omega^*)=\liminf_{R\to\infty}\frac{\rho_{\Omega^*}(0,C_R)}{\log R} &=
\liminf_{R\to\infty}\frac{1}{\log R}\int_{[0,R]}\lambda_{\Omega^*}(z)|dz|\\
&=\liminf_{R\to\infty}\frac{1}{\log R}\int_{[R_0,R]}\lambda_{\Omega^*}(z)|dz| \\
&\geq \tilde{C}\liminf_{R\to\infty}\frac{1}{\log R}\int_{[R_0,R]}\frac{|dz|}{\delta_{\Omega^*}(z)}\\
&= \tilde{C}\liminf_{R\to\infty}\frac{1}{\log R}\int_{R_0}^{R}\frac{dt}{t+1}\\
&= \tilde{C}.
\end{align*}
\qed

In \cite[Theorem 5.2]{CZKR}, it is proved that given $p>0$, there exists a domain $G$, containing $0$, such that $h(G)=p>0$. The domain $G$ is constructed inductively, and it is of the form
\[
G=\C\setminus \bigcup_{n\geq 1}\left(C_{R_n}\setminus \Gamma_n\right),
\]
where $R_n$ is a sequence tending to infinity and satisfying $R_1=2$ and $R_{n+1}>R_n^2$, for all $n$. Moreover, $C_{R_n}$ is the circle of radius $R_n$ centered at $0$ and $\Gamma_n=\bigg\{R_ne^{it}:\ |t|<\frac{\pi}{R_n^{2p}}\bigg\}$. 

For small $\epsilon>0$, we consider the domain 
\[
G_\epsilon=G\setminus \bigcup_{j\geq 1} A_j(\epsilon),
\]
where $A_j(\epsilon)=\{2^je^{it}:\ |t-\pi|<\epsilon\}$. See Figure \ref{Fig1}.

Let $x_n$, $y_n$ be two sequences of real numbers. Throughout the rest of the paper, we will write $x_n\lesssim y_n$ if there is a universal constant $C>0$ such that $x_n\leq Cy_n$, for all $n\in\N$. We will also write $x_n\sim y_n$ if $\lim_n \frac{x_n}{y_n}=1$.

\begin{figure}
	\includegraphics[width=0.7\linewidth]{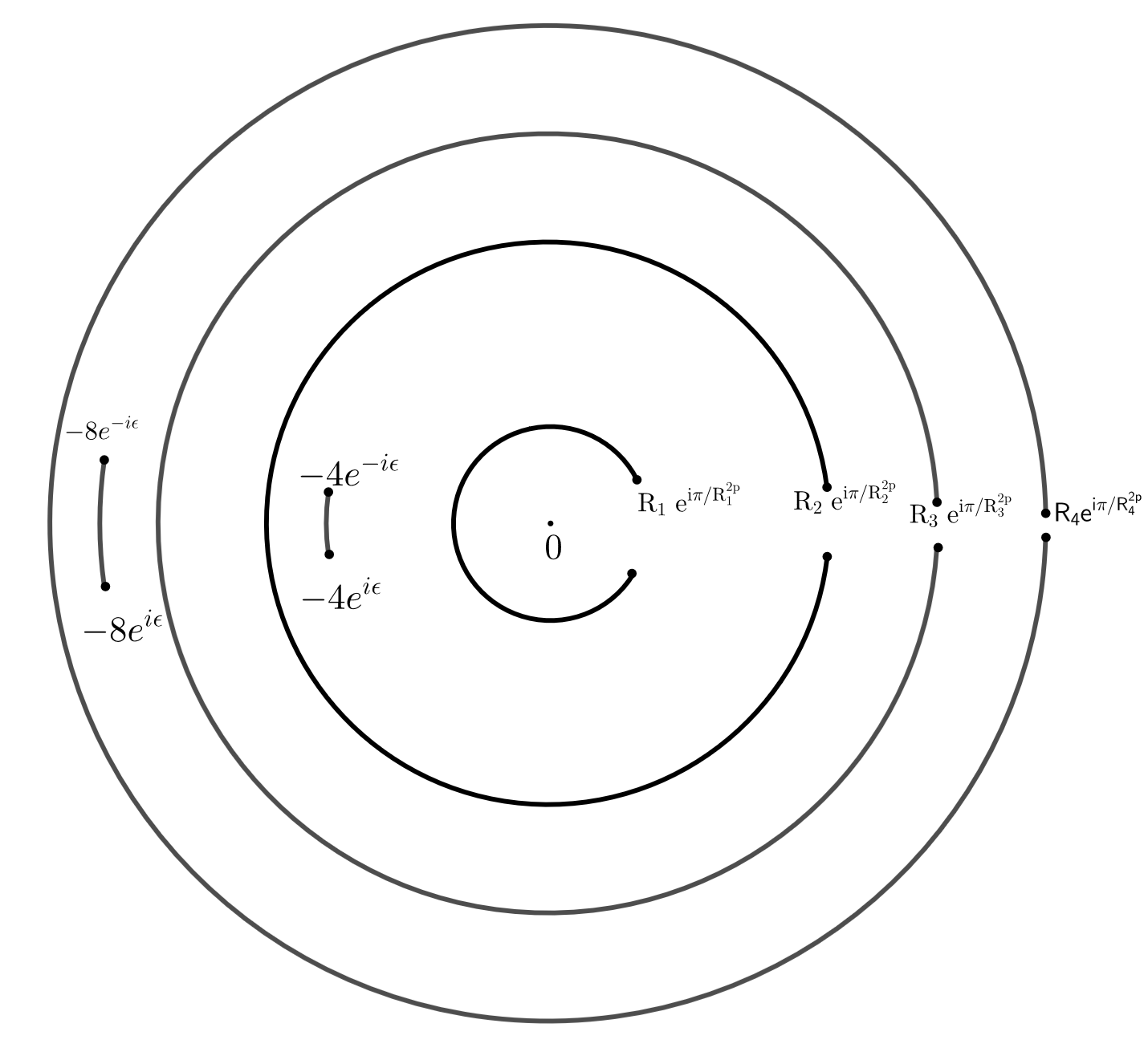}
	\caption{The domain $G_\epsilon$ for some $\epsilon>0$. The short circular arcs are $A_2(\epsilon)$ and $A_3(\epsilon)$.}
	\label{Fig1}	
\end{figure}

We need the following lemma.
\begin{lemma}\label{L1}
The domain $G_\epsilon$ satisfies $b(G_\epsilon)<\infty$, for all $\epsilon>0$ sufficiently small.
\end{lemma}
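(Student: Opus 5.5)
By \eqref{bergmanid}, it suffices to find radii $R\to\infty$ with $\rho_{G_\epsilon}(0,C_R)\le K\log R$ for a constant $K=K(\epsilon,p)$; here $C_R=\partial D(0,R)\cap G_\epsilon$. Since $\rho_{G_\epsilon}(0,C_R)$ is an infimum, it is enough to exhibit, for $R=R_n$ along the subsequence, an explicit curve from $0$ to a point of $C_{R_n}\cap G_\epsilon$ whose hyperbolic length in $G_\epsilon$ is $O(\log R_n)$. The natural choice is the positive real axis from $0$ to a point just outside or inside $R_n$. Its crossings of the circles $C_{R_m}$, $m\le n$, all pass through the gaps $\Gamma_m$ centered at $R_m$. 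It stays far from the arcs $A_j(\epsilon)$, which lie near the negative axis. I will take $R$ to be a point of the segment $[0,R_n]$ slightly before $R_n$, e.g. $R=R_n(1-R_n^{-2p})$, or simply aim at $C_{R}$ with $R=R_n/2$.

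\textbf{Bounding $\lambda_{G_\epsilon}$ on $[1,R]$.} I will use \eqref{upperlambda} away from the circles: for $t\in[R_{m}+1,R_{m+1}/2]$, say, the nearest boundary of $G_\epsilon$ is at distance $\gtrsim t$. The boundary points in $\bigcup A_j(\epsilon)$ near radius $t$ lie at angle near $\pi$, hence at distance $\ge t\sin(\pi-\epsilon)\cdot$const$\gtrsim t$ from $t$. The circles $C_{R_m}$ and $C_{R_{m+1}}$ are at distance $\ge t-R_m$ and $\ge R_{m+1}-t$. Thus $\lambda\lesssim 1/\min(t,\,t-R_m,\,R_{m+1}-t)$ on these stretches, and the integral over a stretch is $\lesssim\log R_{m+1}$ plus logarithmic contributions near its endpoints.

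\textbf{Crossing the gaps (the main obstacle).} Near $t=R_m$ the curve passes through the slit gap $\Gamma_m$, an arc of length $\approx 2\pi R_m^{1-2p}$. Here $\delta_{G_\epsilon}(t)\gtrsim\min(|t-R_m|,\,R_m^{1-2p})$ roughly, so the estimate \eqref{upperlambda} gives an integral over $|t-R_m|\le R_m/2$ of about
\[
\log\frac{R_m}{R_m^{1-2p}}\approx 2p\log R_m .
\]
A more careful computation, locally comparing with the complement of two rays through a gap, gives the same order. Summing over $m\le n$ and over the stretches gives
\[
\rho_{G_\epsilon}(0,C_R)\lesssim \sum_{m\le n}(1+2p)\log R_m+O(n).
\]
Because $R_{m+1}>R_m^2$, we have $\sum_{m\le n}\log R_m\le 2\log R_n$ and $n\lesssim\log\log R_n$. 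Hence $\rho_{G_\epsilon}(0,C_R)\lesssim\log R$, and $b(G_\epsilon)<\infty$ by \eqref{bergmanid}.

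The points needing care are the precise lower bound on $\delta_{G_\epsilon}$ near the gap endpoints, which requires the gap to be centered on the positive axis, and checking that $\epsilon$ small keeps the arcs $A_j(\epsilon)$ from approaching $[0,\infty)$. If the distance bound at the gap is delicate, I will instead take the curve to cross each circle exactly at $R_m$, the center of $\Gamma_m$. There $\delta=2R_m\sin(\pi R_m^{-2p}/2)\approx\pi R_m^{1-2p}$. I will then split the integral over $|t-R_m|<R_m^{1-2p}$, where $\lambda\lesssim R_m^{2p-1}$ and the contribution is $O(1)$, from the rest, where $\delta\gtrsim |t-R_m|$ and the contribution is logarithmic as above.
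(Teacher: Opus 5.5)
Your proposal is correct and follows essentially the paper's route. You bound $\rho_{G_\epsilon}(0,C_R)$ by the hyperbolic length of a positive real segment, use $\lambda_{G_\epsilon}\le 2/\delta_{G_\epsilon}$ with distances to the gap endpoints $R_me^{\pm i\pi R_m^{-2p}}$, and exploit $R_{m+1}>R_m^2$. The only structural difference is that you sum the block contributions directly (a geometric series in $\log R_m$), whereas the paper uses Stolz--Ces\`aro to reduce to a single block $[R_n,R_{n+1}]$.

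There are two slips to fix. Near $t=R_m$ you need $\delta_{G_\epsilon}(t)\gtrsim\max(|t-R_m|,R_m^{1-2p})$ rather than $\min$, since the $\min$ bound leaves the non-integrable term $1/|t-R_m|$; the $\max$ bound is exactly what your fallback splitting uses. Likewise, the claim $\delta_{G_\epsilon}\gtrsim t$ on $[R_m+1,R_{m+1}/2]$ fails near the left endpoint, but your subsequent bound $\min(t,\,t-R_m,\,R_{m+1}-t)$ is correct and suffices.
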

\begin{proof}
We first observe that 
\[
b(G_\epsilon)=\liminf_{R\to\infty}\frac{\rho_{G_\epsilon}(0,C_R)}{\log R}\leq \liminf_{n\to \infty}\frac{\rho_{G_\epsilon}(0,C_{R_n})}{\log R_n}\leq \liminf_{n\to \infty}\frac{\ell_{G_\epsilon}([0,R_n])}{\log R_n}.
\]
By the Stolz-Ces\`{a}ro theorem, the last term in this inequality is no larger than
\[
\limsup_{n\to \infty}\frac{\ell_{G_\epsilon}([0,R_{n+1}])-\ell_{G_\epsilon}([0,R_n])}{\log R_{n+1}-\log R_n}=\limsup_{n\to \infty}\frac{\ell_{G_\epsilon}([R_n,R_{n+1}])}{\log R_{n+1}-\log R_n}.
\]
Hence
\begin{equation}\label{L1Eq1}
	b(G_\epsilon)\leq \limsup_{n\to \infty}\frac{\ell_{G_\epsilon}([R_n,R_{n+1}])}{\log R_{n+1}-\log R_n},
\end{equation}
which shows that it suffices to bound $\ell_{G_\epsilon}([R_n,R_{n+1}])$. Note that by \eqref{upperlambda},
\[
\ell_{G_\epsilon}([R_n,R_{n+1}]) \lesssim \int_{R_n}^{R_{n+1}}\frac{dt}{\delta_{G_\epsilon}(t)}.
\]
An easy calculation shows that there exists a number $M_n\in (R_n,R_{n+1})$ which satisfies $\frac{M_n}{R_{n+1}}\to 1/2$, as $n\to\infty$, and is equidistant from the points $R_n e^{i\pi R_n^{-2p}}$ and $R_{n+1}e^{i\pi R_{n+1}^{-2p}}$. For $\epsilon>0$ sufficiently small, we then have
\[\delta_{G_\epsilon}(t)= \begin{cases} 
	\big| t-R_n e^{i\pi R_n^{-2p}}\big|, & t\in (R_n,M_n) \\
	\big| t-R_{n+1} e^{i\pi R_{n+1}^{-2p}}\big|, & t\in (M_n,R_{n+1}).
\end{cases}
\]
Set $c_n=\cos\pi R_n^{-2p}$ and $s_n=\sin\pi R_n^{-2p}$. We split the integral $\int_{R_n}^{R_{n+1}}\frac{dt}{\delta_{G_\epsilon}(t)}$ into three pieces over the intervals  $(R_n,M_n)$, $(M_n,R_{n+1}c_{n+1})$,  and \\ $(R_{n+1}c_{n+1},R_{n+1})$.

Using the estimates $1-c_n\sim \frac{\pi^2}{2} R_n^{-4p}$ and $s_n \sim \pi R_n^{-2p}$, we find
\begin{align*}
	\int_{M_n}^{R_{n+1}c_{n+1}}\frac{dt}{\delta_{G_\epsilon}(t)} &\lesssim \int_{M_n}^{R_{n+1}c_{n+1}}\frac{dt}{R_{n+1}c_{n+1}-t+R_{n+1}s_{n+1}}\\
	&=\log\frac{R_{n+1}c_{n+1}-M_n+R_{n+1}s_{n+1}}{R_{n+1}s_{n+1}} \\
	&=\log\left(1+\frac{c_{n+1}-M_n/R_{n+1}}{s_{n+1}}\right) \\
	&\lesssim \log R_{n+1}.
\end{align*}
Furthermore, 
\begin{align*}
\int_{R_{n+1}c_{n+1}}^{R_{n+1}}\frac{dt}{\delta_{G_\epsilon}(t)}&\lesssim 
\int_{R_{n+1}c_{n+1}}^{R_{n+1}}\frac{dt}{t-R_{n+1}c_{n+1}+R_{n+1}s_{n+1}}\\
&=\log\left(\frac{1-c_{n+1}}{s_{n+1}}+1\right)\to 0,
\end{align*}
as $n\to\infty$. In a similar fashion, we find
\begin{align*}
\int_{R_n}^{M_n}\frac{dt}{\delta_{G_\epsilon}(t)}\lesssim \int_{R_n}^{M_n}\frac{dt}{t-R_nc_n+R_ns_n}&=\log\frac{M_n-R_nc_n+R_ns_n}{R_n-R_nc_n+R_ns_n}\\
&\lesssim \log\frac{M_n-R_nc_n+R_n^{1-2p}}{R_n^{1-4p}+R_n^{1-2p}}\\
&=\log \frac{R_n^{2p}\left(M_n/R_n-c_n+R_n^{-2p}\right)}{R_n^{-2p}+1}\\
&\lesssim \log \left(R_n^{2p-1}M_n\right).
\end{align*}
Using the fact that $M_n<R_{n+1}$, we infer
\[
\int_{R_n}^{M_n}\frac{dt}{\delta_{G_\epsilon}(t)}\lesssim 2p\log R_n+\log\frac{R_{n+1}}{R_n}.
\]
Upon adding the estimates for the three integrals above, dividing by $\log R_{n+1}-\log R_n$, and using the fact that the latter quantity is no less than $\log R_n$ (since $R_{n+1}>R_n^2$), we find that 
\[
\limsup_{n\to\infty}\frac{\int_{R_n}^{R_{n+1}}\frac{dt}{\delta_{G_\epsilon}(t)}}{\log R_{n+1}-\log R_n}\leq 3+2p<\infty.
\]                            
The conclusion follows by \eqref{L1Eq1}.
\end{proof}

We can now finish the proof of Theorem \ref{Theorem1}.

\noindent\textit{Proof of Theorem \ref{Theorem1}.}
Consider the domain $D=\C\setminus\{2^k:\ k\in \N\}$. By Theorem \ref{Theorem2}, we have $b(D)>0$. Let $0<p<b(D)$. By Theorem 5.2 in \cite{CZKR}, there exists a domain $\Omega$ just as discussed before Lemma \ref{L1} so that $h(\Omega)=p<b(D)$. Set $G=D\cap\Omega$. Then $G$ is a domain which contains $0$ and satisfies $h(G)=h(\Omega)$. The last equality follows from \cite[Theorem 2.4(i)]{KimSug} because $\C\setminus D$ is polar. Hence,
\[
0<h(G)=h(\Omega)=p<b(D)\leq b(G).
\]
Finally, for $\epsilon>0$ small, since $G_\epsilon\subset G$, Lemma \ref{L1} gives
\[
b(G)\leq b(G_\epsilon)<\infty.
\]
\qed


\section{Proofs of Theorems \ref{Theorem3}, \ref{Theorem4}}

We recall that the family $\mathcal{F}_1$ contains the domains $G_a$, $a\in (0,\pi]$. For $a\in (0,\pi)$, we set 
\[
S_a=\{z:\ \Re z> |\Im z|\cot a\}
\]
and $E_a=(\Z+i\Z)\setminus\overline{S_a}$. Let $G_a=\C\setminus E_a$. For $a=\pi$, we let $G_\pi= \C\setminus\{-n:\ n\in\N\}$ and $S_\pi=\C\setminus (-\infty,-1]$, so that $S_a$ is the sector associated with the domain $G_a$ for all $a\in (0,\pi]$. See Figure \ref{Fig2}. We also defined the family  $\mathcal{F}_2$ which contains domains satisfying the hypotheses of Theorem \ref{Theorem4}. For $G\in \mathcal{F}_2$, we let $S=\C\setminus (-\infty,-r_1]$ be the sector associated with $G$. Finally, we let $\mathcal{F}:=\mathcal{F}_1\cup\mathcal{F}_2$.

\begin{figure}
	\includegraphics[width=0.5\linewidth]{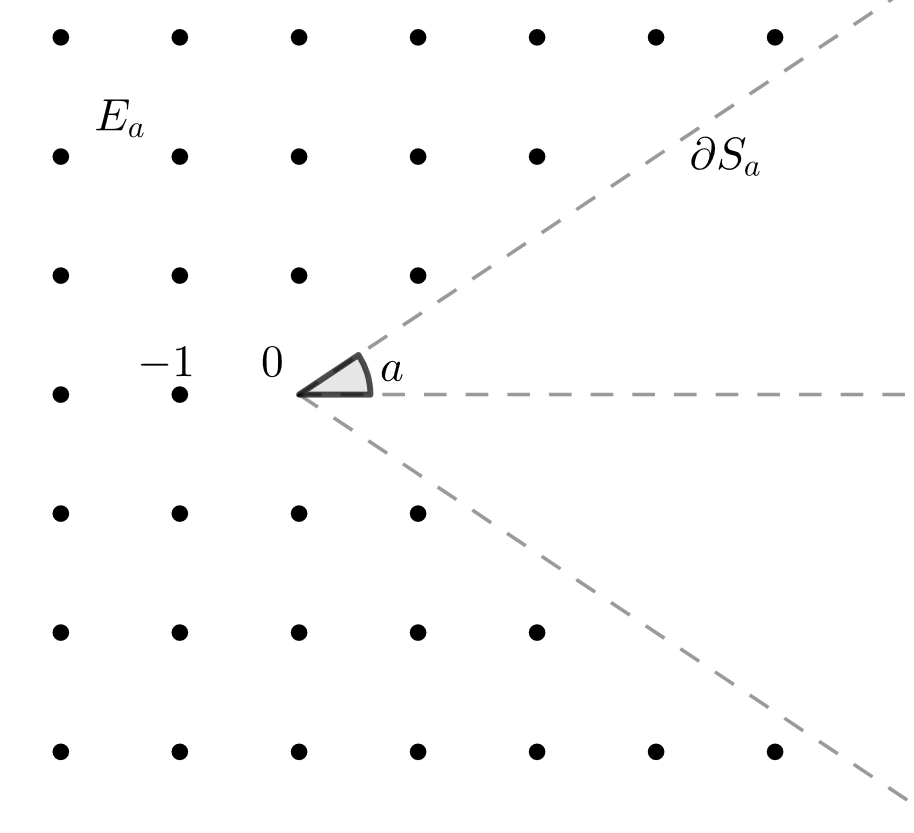}
	\caption{The domain $G_a$ for some $a\in (0,\pi/2)$. It is the complement of the set $E_a$.}
	\label{Fig2}	
\end{figure}

\begin{lemma}\label{L2}
Let $G\in \mathcal{F}_2$. Then 
\begin{equation}\label{dist}
\rho_G(t,\partial S)\to +\infty,
\end{equation}
as $t\to +\infty$, $t\in\R$.
\end{lemma}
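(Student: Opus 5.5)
We need a lower bound $\rho_G(t,\partial S)\to\infty$, where $\partial S=(-\infty,-r_1]$ and $G=\C\setminus\{-r_j:\ j\in\N\}$. Since the points $-r_j$ lie on $\partial S$, $\rho_G(t,\partial S)$ is the infimum of $\rho_G(t,w)$ over $w\in\partial S\cap G$, i.e. over points of the negative ray beyond $-r_1$ lying strictly between consecutive $-r_j$. The plan is to bound $\lambda_G$ from below by Theorem \ref{TheoremA} and integrate along an arbitrary curve from $t$ to $\partial S$.

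\textbf{Step 1: lower bound for $\lambda_G$.} Set $d=\sup_n|r_{n+1}-r_n|<\infty$. For $z\in G$ with $\delta_G(z)$ large, pick the nearest boundary point $a=-r_m$. Among the $-r_j$ there is one, $b$, with $|b-a|$ comparable to $\delta_G(z)$: the $r_j$ have gaps at most $d$, so some $r_j$ satisfies $|r_j-r_m-\delta_G(z)|\le d$. This gives $\beta_G(z)\le |\log(\delta_G(z)/(\delta_G(z)-d))|\le 1$ once $\delta_G(z)\ge 2d$. If instead $\delta_G(z)\le 2d$, take $b$ a neighbouring point at distance between $\min$ gap and $d$. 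A lower bound on the gaps is not assumed, but we may choose $b$ with $|b-a|\in[d,2d]$ by skipping points, so that $\beta_G(z)\le \log(2d/\delta_G(z))+C$ for small $\delta_G$. Either way, Theorem \ref{TheoremA} yields
\[
\lambda_G(z)\ \ge\ \frac{c}{\delta_G(z)\bigl(1+\log^+(2d/\delta_G(z))\bigr)},
\]
and in particular $\lambda_G(z)\ge c/\delta_G(z)$ whenever $\delta_G(z)\ge 2d$.

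\textbf{Step 2: integrate along a curve.} Let $\gamma$ be any curve from $t>0$ (large) to a point $w\in\partial S\cap G$. We have $\delta_G(z)\le |z|+r_1+d$, since there is a $-r_j$ within $d$ of $-|z|$. Hence, while $|z|$ is large, $\lambda_G(z)\gtrsim 1/|z|$ in the region $\{\delta_G\ge 2d\}$. The curve must reach $\delta_G(w)\le d$ at its endpoint. Two cases arise. If $\gamma$ reaches a point with $|z|\le \sqrt t$, then the radial part yields $\int\lambda_G|dz|\gtrsim\int_{\sqrt t}^{t}ds/s\sim\tfrac12\log t$. Otherwise $\gamma$ stays in $|z|\ge\sqrt t$ and travels from $t$ to the negative axis, where $\delta_G$ is comparable to $|z|$ near $t$ but drops to $\le d$. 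Then $\delta_G$ decreases from about $t$ to $d$ along $\gamma$, and since $|\nabla\delta_G|\le1$ we get $\int_\gamma |dz|/\delta_G\ge\int_{2d}^{t/2} ds/s\sim\log t$ on the portion where $\delta_G\ge 2d$. (More precisely, $\delta_G(t)\ge t$ as $t>0$ is at distance $>t+r_1$ from the negative axis.) In both cases $\ell_G(\gamma)\gtrsim \log t\to\infty$.

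\textbf{Main obstacle.} The delicate point is the estimate of $\beta_G$ uniformly, i.e. producing a second boundary point $b$ with $|b-a|\asymp\delta_G(z)$ using only the upper bound on the gaps. The second case of Step 2 already uses only $\delta_G\ge 2d$, so small-$\delta_G$ behaviour is actually not needed; it suffices to handle $\delta_G\ge 2d$. With that reduction, the co-area type inequality $\int_\gamma|dz|/\delta_G\ge \log(\delta_G(\text{start})/\delta_G(\text{end}))$, which uses that $\delta_G$ is $1$-Lipschitz, finishes the argument and even gives the quantitative bound $\rho_G(t,\partial S)\gtrsim\log t$.
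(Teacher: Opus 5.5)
Your argument is correct, but it takes a different route from the paper.

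\textbf{The paper's route.} It first uses domain monotonicity to pass to a subsequence of the $r_j$ whose gaps are bounded below by some $\delta>0$. It then picks a point $s_t\in\partial S$ realizing $\rho_G(t,\partial S)$ and discards every puncture except the two, $-r_{k+1}$ and $-r_{k+2}$, adjacent to $s_t$. After rescaling to $\C\setminus\{-1,0\}$, it concludes from the known behaviour of $\lambda_{\C\setminus\{-1,0\}}$ at $\infty$ that $\rho_{\C\setminus\{-1,0\}}(M/\delta,t/M)\to\infty$.

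\textbf{Your route.} You keep all the punctures. For $\delta_G(z)\ge 2d$, the upper bound on the gaps gives a second boundary point $b$ with $|b-a|\asymp\delta_G(z)$. Hence $\beta_G$ is bounded and Theorem \ref{TheoremA} gives $\lambda_G\ge c/\delta_G$ there. You then use the $1$-Lipschitz property of $\delta_G$ on a subarc of an arbitrary curve $\gamma$ from $t$ to $\partial S\cap G$ along which $\delta_G$ runs from $t$ down to $2d$ while staying in $[2d,t]$. This exists because $\delta_G(t)=t+r_1$ and $\delta_G\le d/2$ at the endpoint, and it yields $\ell_G(\gamma)\ge c\log(t/2d)$.

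\textbf{What each approach buys.} Your version needs no reduction to gaps bounded below and no existence of a minimizer $s_t$. It also gives a sharper, quantitative bound, $\rho_G(t,\partial S)\gtrsim\log t$. This is the true order, in view of $\lambda_G\le 2/\delta_G$. The two-puncture comparison only gives growth like $\log\log t$, because $\lambda_{\C\setminus\{-1,0\}}(x)$ behaves like $1/(x\log x)$ at infinity. The paper's route is shorter, given the standard estimate for the twice-punctured plane, and it runs in parallel with the proof of Lemma \ref{L3}. That lemma only needs divergence qualitatively.

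\textbf{Superfluous parts.} The case distinction in your Step 2 (whether $\gamma$ enters $\{|z|\le\sqrt t\}$) and the small-$\delta_G$ branch of Step 1 can be dropped. The Lipschitz argument handles every curve directly. The first case, as written, would in any event rely on a lower bound for $\lambda_G$ in the region $\{\delta_G<2d\}$ that you only sketch.
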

\begin{proof}
Let $G=\C\setminus\{-r_j:\ j\in\N\}$ for some sequence $r_n$, $n\in\N$, of positive real numbers strictly increasing to infinity, and satisfying $M:=\sup_n |r_{n+1}-r_n|<+\infty$. We set $S=\C\setminus (-\infty,-r_1]$, and we have to prove \eqref{dist}. Let $0<\delta<M$. By the domain monotonicity property for the hyperbolic distance, we may assume that the sequence $\{r_j,\ j\in\N\}$ satisfies $\inf_n |r_{n+1}-r_n|\geq \delta$.

For each $t>0$, let $s_t\in (-\infty,-r_1)\setminus\{-r_j:\ j\in\N\}$ be  such that $\rho_G(t,\partial S)=\rho_G(t,s_t)$. Let $k=k(t)$ be an integer such that $-r_{k+1}<s_t<-r_k$. The domain monotonicity property then implies
\begin{equation}\label{L2Eq1}
\rho_G(t,s_t)\geq \rho_{G^*}(t,s_t),
\end{equation}
where $G^*=\C\setminus\{-r_{k+2},-r_{k+1}\}$. The conformal map $\frac{z+r_{k+1}}{r_{k+2}-r_{k+1}}$ transforms $G^*$ to $\C\setminus\{-1,0\}$ and the images of the points $s_t$ and $t$ satisfy 
\[
\frac{s_t+r_{k+1}}{r_{k+2}-r_{k+1}}<\frac{s_t+r_{k+1}}{\delta}\leq\frac{M}{\delta},
\]
and
\[
\frac{t+r_{k+1}}{r_{k+2}-r_{k+1}}\geq\frac{t}{M}.
\]
It follows that for $t>0$ sufficiently large,
\begin{equation}\label{L2Eq2}
\rho_{G^*}(t,s_t)=\rho_{\C\setminus\{-1,0\}}\left(\frac{s_t+r_{k+1}}{r_{k+2}-r_{k+1}},\frac{t+r_{k+1}}{r_{k+2}-r_{k+1}}\right)\geq\rho_{\C\setminus\{-1,0\}}\left(\frac{M}{\delta},\frac{t}{M}\right).
\end{equation}
 The last term on the right-hand side in \eqref{L2Eq2} tends to $+\infty$, as $t\to +\infty$ by a known estimate for $\lambda_{\C\setminus\{-1,0\}}$; see, for example, \cite[Eq. (1.5)]{BP}. The conclusion follows by \eqref{L2Eq1}.
\end{proof}

We also need a similar result for the family $\mathcal{F}_1$. 

\begin{lemma}\label{L3}
	Let $a\in (0,\pi]$ and consider the domain $G_a$ and the corresponding sector $S_a$. Then
	\begin{equation}\label{dist2}
		\rho_{G_a}(t,\partial S_a)\to +\infty,
	\end{equation}
	as $t\to +\infty$, $t\in\R$.
\end{lemma}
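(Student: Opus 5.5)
The plan is to imitate the proof of Lemma \ref{L2}, replacing the two-point comparison domain by one adapted to the lattice. For $a=\pi$ there is nothing new: $G_\pi=\C\setminus\{-n:\ n\in\N\}$ belongs to $\mathcal{F}_2$ with $r_n=n$, and $S_\pi=\C\setminus(-\infty,-1]$ is exactly the associated sector. So Lemma \ref{L2} gives \eqref{dist2} directly. From now on let $a\in(0,\pi)$, so that $\partial S_a$ consists of the two rays $\{re^{\pm ia}:\ r\geq 0\}$.

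\textbf{Step 1: localize the competitor.} For large $t>0$, pick $s_t\in\partial S_a\cap G_a$ with $\rho_{G_a}(t,s_t)\le \rho_{G_a}(t,\partial S_a)+1$. Since the complement of $G_a$ near $\partial S_a$ contains lattice points within bounded Euclidean distance of every boundary point (the lattice is $1$-dense), I will choose two points $w_1,w_2\in E_a$ with $|w_1-w_2|\le 2$ and $|s_t-w_1|\le 2$. One has to check that $E_a$ really contains points within distance $O(1)$ of each point of $\partial S_a$ outside a bounded set. This holds: near the ray $re^{ia}$ with $r$ large, the lattice points just outside $\overline{S_a}$ are at distance at most $\sqrt2$ plus a constant.

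\textbf{Step 2: compare with a twice punctured plane.} By domain monotonicity, $\rho_{G_a}(t,s_t)\ge\rho_{\C\setminus\{w_1,w_2\}}(t,s_t)$. Now apply the affine map $z\mapsto (z-w_1)/(w_2-w_1)$, which sends $\C\setminus\{w_1,w_2\}$ onto $\C\setminus\{0,1\}$. Because $1\le|w_2-w_1|\le 2$, the image of $s_t$ lies in a fixed compact set $K$ of the form $\{|z|\le 2\}$ with the points $0$ and $1$ removed. The image of $t$, on the other hand, has modulus at least $\dist(t,\partial S_a)/2-2$.

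\textbf{Step 3: use the metric near infinity.} The quantity $\dist(t,\partial S_a)$ equals $t\sin a$ when $a\le\pi/2$, and equals $t$ when $a>\pi/2$; in both cases it tends to infinity. By the known estimate $\lambda_{\C\setminus\{0,1\}}(z)\gtrsim 1/(|z|\log|z|)$ for large $|z|$ (\cite[Eq. (1.5)]{BP}, or Theorem \ref{TheoremA}), the distance from a point of $K$ to a point of modulus $\to\infty$ tends to $+\infty$, uniformly over $K$.

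\textbf{Main obstacle.} The delicate point is that the image of $s_t$ may approach the punctures $0$ or $1$, so it is not confined to a compact subset of $\C\setminus\{0,1\}$. To handle this, I will not insist on compactness of $K$ in the twice punctured plane. Instead I will bound $\rho_{\C\setminus\{0,1\}}(z,\zeta)$ from below by $\rho_{\C\setminus\{0,1\}}(\zeta,\{|w|=3\})$, valid since every path from $z\in K$ to $\zeta$ must cross the circle $|w|=3$. This last distance tends to infinity as $|\zeta|\to\infty$, again by \cite[Eq. (1.5)]{BP}.

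Finally, adjust the choice in Step 1 so that $|w_1-w_2|$ lies in $[1,2]$, taking $w_2=w_1\pm1$ or $w_1\pm i$ appropriately, whichever stays outside $\overline{S_a}$. Combining Steps 1–3 gives \eqref{dist2}.
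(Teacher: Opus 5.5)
Your proposal is correct and follows essentially the paper's proof. It reduces $a=\pi$ to Lemma \ref{L2}, compares $G_a$ with the plane punctured at two adjacent points of $E_a$ near the boundary point (the paper takes $a_t$ and $a_t-1$, which lies in $E_a$ automatically), normalizes to $\C\setminus\{-1,0\}$, and uses the growth $\lambda_{\C\setminus\{0,1\}}(z)\gtrsim 1/(|z|\log|z|)$ at infinity. The only difference is the last step: the paper applies circular symmetrization (Theorem \ref{TheoremC}) to move both points to the positive real axis and compare with $\rho_{\C\setminus\{-1,0\}}(2,t\sin a)$, whereas your crossing-the-circle $|w|=3$ argument gets the same bound without symmetrization and also handles the image of $s_t$ approaching a puncture.
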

\begin{proof}
We first observe that it suffices to prove \eqref{dist2} for $a\in (0,\pi)$, since for $a=\pi$ the desired conclusion follows from Lemma \ref{L2} by choosing $r_j=j$, $j\in\N$. Fix $a\in (0,\pi)$ and let $G_a$ and $S_a$ be as above. For $t>0$, let $w_t\in\partial S_a$ be such that 
$\rho_{G_a}(t,\partial S_a)=\rho_{G_a}(t,w_t)$. Since the diameter of the integer lattice is smaller than $2$, we can find $a_t\in E_a$ so that $|w_t-a_t|<2$. Note that we also have $a_t-1\in E_a$. Let $G^*=\C\setminus\{a_t, a_t-1\}$ and observe that by the domain monotonicity property and a translation,
\begin{equation}\label{L3Eq1}
	\rho_{G_a}(t,w_t)\geq \rho_{G^*}(t,w_t)=\rho_{\C\setminus\{-1,0\}}(t-a_t,w_t-a_t).
\end{equation}
It follows from Theorem \ref{TheoremC} that 
\begin{equation}\label{L3Eq2}
\rho_{\C\setminus\{-1,0\}}(t-a_t,w_t-a_t)\geq \rho_{\C\setminus\{-1,0\}}\left(|t-a_t|,|w_t-a_t|\right).
\end{equation}
Since $a_t\in E_a$, a simple geometric consideration shows that $|t-a_t|>t\sin a$. This fact together with the inequality $|w_t-a_t|<2$ implies for $t$ sufficiently large that
\[
\rho_{\C\setminus\{-1,0\}}\left(|t-a_t|,|w_t-a_t|\right)>\rho_{\C\setminus\{-1,0\}}\left(2,t\sin a\right).
\]
Just like in the proof of the previous lemma, known estimates for $\lambda_{\C\setminus\{-1,0\}}$ show that 
\[
\rho_{\C\setminus\{-1,0\}}\left(2,t\sin a\right)\to +\infty,
\]
as $t\to +\infty$. Finally, by \eqref{L3Eq1} and \eqref{L3Eq2}, we have that $\rho_{G_a}(t,w_t)\to +\infty$, as $t\to +\infty$.
\end{proof}

We also need to identify certain minimal geodesics. 
\begin{lemma}\label{L4}
Let $G\in \mathcal{F}$. Then any real segment $[c,d]$, where $0<c<d$, is a minimal geodesic in $G$. 
\end{lemma}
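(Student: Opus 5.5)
Fix $G\in\mathcal F$ and $0<c<d$. We must show that $\ell_G([c,d])=\rho_G(c,d)$, i.e.\ no piecewise $C^1$ curve in $G$ joining $c$ to $d$ is hyperbolically shorter than the segment. The tool is the reflection principle, Theorem \ref{TheoremB}, applied to lines adapted to the symmetries of $G$. Two symmetry facts drive everything: every domain in $\mathcal F$ is symmetric with respect to $\R$, and its complement lies in the closed half-plane $\{\Re z\le 0\}$ (for $G_a$ with $a<\pi/2$ it may meet $\{\Re z>0\}$, which needs a separate look). The strategy is the standard one for slit-type domains. First, reduce to curves in the closed upper half-plane using symmetry about $\R$. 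Then show that any such curve can be pushed onto $\R$ without increasing length, using reflections in vertical lines $L_x=\{\Re z=x\}$ with $x>0$.

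\emph{Step 1.} Let $\gamma$ be a curve in $G$ from $c$ to $d$. Since $G=\overline G$ (conjugation) and $\lambda_G(\bar z)=\lambda_G(z)$, the curve obtained by replacing every portion of $\gamma$ in the lower half-plane by its conjugate has the same length. So we may assume $\gamma\subset \overline{\Ha}$.

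\emph{Step 2.} To project $\gamma$ onto the segment, we want $\lambda_G(x+iy)\ge\lambda_G(x)$ for $x>0$, $y\ge0$, with equality only at $y=0$. Together with the trivial inequality $|d(\Re\gamma)|\le|d\gamma|$, this gives $\ell_G(\gamma)\ge\ell_G(\Re\gamma)\ge\ell_G([c,d])$. The second inequality holds because $\Re\gamma$ is a curve on $\R$ covering $[c,d]$, and the relevant points have positive real part. For points with $\Re\gamma\le 0$, we first replace the portion of $\gamma$ lying to the left of $L_{c}$ by its reflection across $L_c$, using Theorem \ref{TheoremB} with $L=L_c$. Its hypothesis is $R(D^-,L_c)\subset D^+$, where $D^-=G\cap\{\Re z<c\}$. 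This amounts to saying that the reflection across $L_c$ of every complement point in $\{\Re z>c\}$ is again a complement point. For $G_\pi$ and $\mathcal F_2$ this is vacuous, since there are no complement points with $\Re z>0$. For $G_a$ it must be checked from the lattice structure. So after this step $\gamma\subset\{\Re z\ge c\}$, and similarly $\gamma\subset\{\Re z\le d\}$ after reflecting across $L_d$ (whose hypothesis is the mirror condition).

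\emph{Step 3.} For the monotonicity in $y$, use Theorem \ref{TheoremB} with horizontal lines $L=\{\Im z=y_0/2\}$. The set of points lying above this line, the part we reflect downward, must be such that the reflection of the part of $G$ above $L$ lands in $G$. Equivalently, complement points below $L$ must reflect to complement points above $L$. This fails for a general lattice. So instead I would compare with the slit domain obtained from circular symmetrization, Theorem \ref{TheoremC}, which gives $\lambda_G(z)\ge\lambda_{G^*}(|z|)$. For $G\in\mathcal F$ the complement points are not on the positive axis, so $G^*\supset\C\setminus(-\infty,0]$ minus a set. That is not quite enough, however, since we need $\lambda_{G}(x)$ itself on the axis.

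I expect the main obstacle to be the case $G_a$, $a<\pi/2$, where the complement is not confined to the left half-plane, so neither vertical nor horizontal reflections directly apply. My fallback there is the Beardon--Pommerenke-free argument via the universal cover. The segment $(0,\infty)$ is the fixed set of the anticonformal involution $z\mapsto\bar z$ of $G$, so it lifts to a geodesic in $\D$ (the fixed arc of an anticonformal isometry), hence $[c,d]$ is a geodesic. Minimality then follows by the standard argument: any competing curve is split at its crossings of $\R$, and pieces leaving $\R_{>0}$ are reflected. Reflection across $\R$ can only shorten the configuration, since a closed loop formed by a curve and its conjugate cannot beat the fixed-point arc, by the uniqueness of non-homotopic geodesics recalled in the Preliminaries.
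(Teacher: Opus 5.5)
\textbf{There is a genuine gap: the case $G_a$ with $a<\pi/2$ is never handled.} This is exactly the case where $E_a$ has points in the open upper half-plane and where minimality is nontrivial.

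Step 3 is abandoned by your own account. So the pointwise inequality $\lambda_G(x+iy)\ge\lambda_G(x)$, on which Step 2 rests, is never established.

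The reflection across $L_c$ does not apply to $G_a$. There are points $m+in\in E_a$ with $m>c$ (take $|n|$ large), and their mirror images $2c-m+in$ are not lattice points unless $2c\in\Z$. So the hypothesis of Theorem B fails. Even when $2c\in\Z$, the half-plane $\{\Re z\ge c\}$ still contains points of $E_a$, so nothing is gained.

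The reflection across $L_d$ is invalid for every $G\in\mathcal F$. The point $-r_1$ (respectively $-1$ for $G_a$) is not in $G$, but its mirror image across $L_d$ is a positive real number to the right of $L_d$, hence in $G$. Luckily that step is unnecessary.

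The fallback does not fill the gap. The anticonformal symmetry does show that $[c,d]$ is a geodesic, and uniqueness of geodesics shows it is shortest in its own homotopy class. Nothing there compares it with curves in other homotopy classes, which is exactly what ``minimal'' means; the paper explicitly warns that geodesics in multiply connected domains need not be minimizing. Conjugating the lower pieces of a competitor is an isometry that places it in $\overline{\Ha}$, but it can still wind around points of $E_a\cap\Ha$, so no comparison with $[c,d]$ follows. The sentence about a loop and its conjugate not beating the fixed arc is not an argument.

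\textbf{The missing idea (the paper's proof).} Reflect whole arcs across horizontal lines at integer heights, instead of looking for a pointwise monotonicity.
\begin{enumerate}
\item After folding into $\overline{\Ha}$, let $M$ be the largest integer such that $\gamma$ meets $\{M<\Im z\le M+1\}$. Then $M\ge 1$ if $\gamma$ is not homotopic to $[c,d]$.
\item Reflect the part of $\gamma$ above $L_M=\{\Im z=M\}$ across $L_M$. Theorem B applies: if $m+in\in E_a$ with $n<M$, then $m+i(2M-n)$ is a lattice point. Since $|2M-n|>|n|$ and $\cot a>0$, we get $m<|n|\cot a<|2M-n|\cot a$, so it again lies in $E_a$.
\item Length does not increase and the top level drops. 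After finitely many steps the curve lies in $\{0\le\Im z\le 1\}$. All complement points there sit on the two boundary lines, so the curve is homotopic to $[c,d]$, and the geodesic property gives $\ell_G(\gamma)\ge\ell_G([c,d])$.
\end{enumerate}

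\textbf{Where your $L_c$ reflection does work.} It handles the cases where the complement lies in $\{\Re z\le 0\}$: the family $\mathcal F_2$, $G_\pi$, and also $G_a$ with $a\ge\pi/2$, since then every point of $E_a$ has real part $\le -1$. There the reflection moves the curve into the convex set $\{\Re z\ge c\}\subset G$. So the curve is homotopic to $[c,d]$, and the same geodesic argument finishes without any pointwise inequality. For $a>\pi/2$ this is in fact the route to take, because there the horizontal reflections violate the hypothesis of Theorem B. For example, $-1\in E_a$ while $-1+2Mi\notin E_a$ once $2M|\cot a|\ge 1$.
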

\begin{proof}
If $G\in\mathcal{F}_2$ or $G=G_\pi$, then by a similar argument as the one given in \cite[Lemma 4.4]{CZ}, it follows that any real segment $[c,d]$, where $0<c<d$, is a minimal geodesic in $G$. We shall show that this is also true if the domain $G$ is some $G_a$ for $a\in (0,\pi)$. Fix $0<c<d$ and let $\gamma$ be a piecewise $C^1$ curve in $G$ joining $c$ to $d$ which is not homotopic to $[c,d]$. By the symmetry of $G$, we may assume that $\gamma \subset\overline{\Ha}$. For $k\in\Z$, we set $S_k=\{z\in\C:\ \Im z\in (k,k+1]\}$ and $L_k=\{z\in\C:\ \Im z=k\}$. We also let $L_k^{+}$ and $L_k^{-}$ be the upper and lower half planes determined by $L_k$, respectively. Let $M(\gamma)=\max\{k\in\Z:\ \gamma\cap S_k\neq \emptyset\}$ and observe that because $\gamma$ is not homotopic to $[c,d]$, we must have $M(\gamma)\geq 1$. 

Recall that $R(A,L_k)$ denotes the reflection of a set $A$ about the line $L_k$, and observe that for any $k\in\N$, we have that the reflection of $L_k^{+}$ about $L_k$ is contained in $L_k^{-}$. By Theorem \ref{TheoremB}, since $G$ is not symmetric with respect to $L_{M}$, where $M=M(\gamma)$, it follows that
\begin{equation}\label{minda}
	\ell_G\left(\gamma\cap L_M^{+}\right)>\ell_G\left(R(\gamma\cap L_M^{+},L_M)\right). 
\end{equation}
Let $\gamma_1$ be the curve defined by being equal to $\gamma(t)$ if $\gamma(t)\in \overline{L_M^{-}}$ and equal to $R(\gamma(t), L_M)$ if $\gamma(t)\in L_M^{+}$. Then \eqref{minda} gives $\ell_G(\gamma)>\ell_G(\gamma_1)$. We repeat this process until, after a finite number of reflections, we end up with a curve $\sigma$ joining $c$ to $d$ which is homotopic to $[c,d]$ and satisfies $\ell_G(\gamma)>\ell_G(\sigma)$. But then, since $\sigma$ is homotopic to $[c,d]$ and since the latter is a geodesic, we have $\ell_G(\sigma)\geq\ell_G([c,d])$. This means that $[c,d]$ is a minimal geodesic in $G$, i.e., $\rho_G(c,d)=\ell_G([c,d])$.
\end{proof}

We need one more auxiliary lemma which will be key to the proofs of Theorems \ref{Theorem3} and \ref{Theorem4}.
\begin{lemma}\label{L5}
Let $G\in\mathcal{F}$. Then
\begin{equation}\label{mainlimit}
\lim_{n\to +\infty}\frac{\rho_G(0,n)}{\log n} = b(S).
\end{equation}
\end{lemma}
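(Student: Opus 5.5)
The plan is to squeeze $\rho_G(0,n)$ between two quantities that both behave like $b(S)\log n$. Here $b(S)=\frac{\pi}{2a}$ for the sector $S=S_a$, and $b(S)=\frac12$ for the slit plane $S=\C\setminus(-\infty,-r_1]$. The geometric input is that $S\subset G$ in every case: for $G_a$, because $E_a$ lies outside $\overline{S_a}$; for $G\in\mathcal{F}_2$ or $G_\pi$, because the removed points lie on the slit. Since $\rho_G(0,1)$ is a fixed finite number, it suffices to study $\rho_G(1,n)$. Working from $1$ avoids the vertex of $S_a$, which is not in $S_a$.

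\emph{Model computation.} First I would verify $\rho_S(1,n)/\log n\to b(S)$ and record the asymptotics of $\lambda_S$ on the positive axis. For $S_a$, the map $z\mapsto z^{\pi/(2a)}$ sends $S_a$ onto the right half-plane and the positive axis onto itself. Since $\lambda_{\{\Re w>0\}}(x)=1/x$, this gives $\lambda_{S_a}(t)=\frac{\pi}{2a}\cdot\frac1t$ and $\rho_{S_a}(1,n)=\frac{\pi}{2a}\log n$. For the slit plane, $z\mapsto\sqrt{z+r_1}$ gives $\lambda_S(t)=\frac{1}{2(t+r_1)}$, and again $\rho_S(1,n)\sim\frac12\log n$. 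By \eqref{bergmanid}, or by the simply connected formula, this limit is $b(S)$.

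\emph{Upper bound.} Since $S\subset G$, domain monotonicity gives $\rho_G(1,n)\le\rho_S(1,n)$. Hence $\limsup_n \rho_G(0,n)/\log n\le b(S)$.

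\emph{Lower bound.} By Lemma \ref{L4}, $\rho_G(1,n)=\ell_G([1,n])=\int_1^n\lambda_G(t)\,dt$, so a pointwise bound $\lambda_G(t)\ge(1-o(1))\lambda_S(t)$ as $t\to+\infty$ suffices. This is where Lemmas \ref{L2} and \ref{L3} enter. Fix $t>0$ and set $R(t)=\rho_G(t,\partial S)$, where the distance is taken to points of $\partial S\cap G$; by those lemmas $R(t)\to+\infty$. Let $\pi:\D\to G$ be a universal covering map with $\pi(0)=t$, and let $D_r=\{|z|<r\}$ with $r=\tanh(R(t)/2)$, so that $D_r$ is the hyperbolic disk of radius $R(t)$ about $0$. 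By \eqref{contraction}, every point of $\pi(D_r)$ lies at $\rho_G$-distance less than $R(t)$ from $t$. The set $\pi(D_r)$ is connected and contains $t\in S$. It therefore cannot meet $\partial S$, and so $\pi(D_r)\subset S$.

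Now apply Schwarz--Pick to the map $z\mapsto\pi(rz)$ from $\D$ into $S$ at $z=0$. This gives $\lambda_S(t)\,r\,|\pi'(0)|\le 2$. Combined with $\lambda_G(t)|\pi'(0)|=2$ from \eqref{density}, it yields $\lambda_G(t)\ge \tanh(R(t)/2)\,\lambda_S(t)$, with $\tanh(R(t)/2)\to1$. Given $\varepsilon>0$, choose $t_0$ with $\tanh(R(t)/2)>1-\varepsilon$ for $t>t_0$. Then
\[
\rho_G(1,n)\ge(1-\varepsilon)\int_{t_0}^n\lambda_S(t)\,dt=(1-\varepsilon)\,b(S)\log n+O(1).
\]
Hence $\liminf_n\rho_G(0,n)/\log n\ge b(S)$, and together with the upper bound the limit \eqref{mainlimit} follows.

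The main obstacle is the localization step: showing that a large hyperbolic ball of $G$ about $t$ lies inside $S$, so that $\lambda_G$ and $\lambda_S$ nearly agree there. Its justification is exactly the connectedness argument above, which uses Lemmas \ref{L2} and \ref{L3} only through the fact that $R(t)\to+\infty$.
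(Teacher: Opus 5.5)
Your proof is correct. The upper bound is the paper's own ($S\subset G$ plus domain monotonicity), but your lower bound takes a genuinely different and more elementary route.

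\textbf{The paper's route.} It lifts $S$ into the disk through the univalent branch $q:S\to\D$ with $\pi\circ q=\mathrm{id}$. It uses Lemmas \ref{L2} and \ref{L3} in a contradiction argument to show that $V=q(S)$ contains every hyperbolic neighbourhood of a tail $[q(t),1)$ of the radius, i.e.\ $V$ is internally tangent to $\D$ at $1$. It then imports an external comparison theorem for internally tangent domains (Cruz-Zamorano) to get $\rho_V(q(t),q(n))\le(1+\epsilon)\rho_{\D}(q(t),q(n))$. Finally, Lemma \ref{L4} together with the symmetry of $q$ identifies the right side with $\rho_G(t,n)$, and univalence of $\pi$ on $V$ identifies the left side with $\rho_S(t,n)$.

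\textbf{Your route.} You bypass all of that with the pointwise estimate
\[
\lambda_G(t)\ge\tanh\bigl(\rho_G(t,\partial S)/2\bigr)\,\lambda_S(t).
\]
You obtain it by using \eqref{contraction} and connectedness to place $\pi(D_r)$ inside $S$, and then applying Schwarz--Pick to $z\mapsto\pi(rz)$. Integrating along $[1,n]$ via Lemma \ref{L4} finishes the argument.

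\textbf{Comparison.} Your argument is self-contained and gives an explicit rate. It needs neither the lift $q$, nor the symmetry of $q$, nor the notion of internal tangency. It is essentially the localization mechanism underlying the tangency theorem the paper quotes. What the paper's route buys is the set-up of $q$ and $V$, which it reuses in Section~5. However, the estimate \eqref{L5Eq9} used there follows equally well from your bound $\rho_G(1,n)\ge(1-\varepsilon)\,b(S)\log n+O(1)$.
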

\begin{proof}
Let $G\in\mathcal{F}$. For convenience, we let $S$ denote the sector associated with $G$, i.e., $S=\C\setminus (-\infty,-r_1]$ if $G\in\mathcal{F}_2$, while $S=S_a$ if $G\in\mathcal{F}_1$.
Let $\pi:\D\to G$ be the universal covering map satisfying $\pi(0)=0$ and $\pi'(0)>0$. By a known property of the universal covering map, \cite[page 81]{CDP}, there exists a univalent function $q:S\to\D$ such that $\pi(q(w))=w$, for all $w\in S$. Set $V=q(S)\subset\D$. The functional equation also shows that $\pi$ is univalent on $V$. It follows that $V$ is a simply connected domain in $\D$, and, because $S$ and $G$ are symmetric about $\R$, the same must hold for $V$. The above conditions imply that $q$ maps $(0,+\infty)$ onto $(0,1)$ in $\D$.

For $t>0$, let $J_t$ be the interval $[q(t),1)$ and consider for $R>0$ the sets 
\begin{equation}\label{L5Eq1}
	S_{\D}(t,R):=\{z\in\D:\ \rho_{\D}\left(z, J_t \right)<R \}.
\end{equation}
We claim that for each $R>0$, we may find $t_0>0$ so that $S_{\D}(t_0,R)\subset V$. If not, then  by \eqref{L5Eq1}, there is some fixed $R_0>0$, a sequence $t_n>0$ increasing to infinity, and points $z_n\in \D\setminus V$ satisfying
\[
\rho_{\D}\left(z_n, J_{t_n} \right)< R_0,
\]
for all $n\in\N$. For each $n$, let $x_n\in (0,+\infty)$ be such that
$\rho_{\D}(z_n,J_{t_n})=\rho_{\D}(z_n,q(x_n))$.
The hyperbolic geodesic in $\D$ joining $z_n$ to $q(x_n)\in J_{t_n}$ meets $\partial V$ at some point $\zeta_n$. Since $q$ is univalent, 
\begin{equation}\label{L5Eq2}
	R_0>\rho_{\D}\left(\zeta_n, J_{t_n} \right)=\rho_{\D}\left(\zeta_n, q(x_n) \right)=\rho_{\D}\left(q(w_n), q(x_n) \right),
\end{equation}
for some $w_n\in \partial S$. Since the universal covering map contracts hyperbolic distances, \eqref{contraction} and \eqref{L5Eq2} yield
\begin{equation}\label{L5Eq3}
	R_0>\rho_G\left(\pi(q(w_n)),\pi(q(x_n))\right)=\rho_G(w_n,x_n)\geq \rho_G(x_n,\partial S).
\end{equation}
Since $q(x_n)\in J_{t_n}$, we have $x_n>t_n$. Letting $n\to\infty$ in \eqref{L5Eq3} and using Lemmas \ref{L2} and \ref{L3} shows that this is a contradiction, and thus our claim holds. 

According to \cite[Definition 5.6]{CZ}, this implies that $V$ is internally tangent to $\D$ at $1$. Now applying \cite[Theorem 5.10 (c)]{CZ}, we conclude that for each $\epsilon>0$, there exists $t_0>0$ such that for any $t>t_0$ and any $n>t$ we have
\begin{equation}\label{L5Eq4}
\rho_V(q(t),q(n))\leq (1+\epsilon)\rho_{\D}(q(t),q(n)).
\end{equation}
Note that the real segment $[q(t),q(n)]$ is a lift of $[t,n]$ under $\pi$. Thus $\ell_{\D}([q(t),q(n)])=\ell_G([t,n])$, and by Lemma \ref{L4}, $\ell_G([t,n])=\rho_G(t,n)$. Since $q$ is symmetric with respect to $\R$, we have $\ell_{\D}([q(t),q(n)])=\rho_{\D}(q(t),q(n))$.

Therefore, we have proved that
\begin{equation}\label{L5Eq6}
\rho_{\D}(q(t),q(n))=\rho_G(t,n).
\end{equation} 
Moreover, the univalence of $\pi$ on $V$ gives
\begin{equation}\label{L5Eq7}
\rho_V(q(t),q(n))=\rho_S(t,n).
\end{equation}
By \eqref{L5Eq4}, \eqref{L5Eq6}, and \eqref{L5Eq7}, we infer that for each $\epsilon>0$, there exists $t_0>0$ so that
\begin{equation}\label{L5Eq8}
\rho_G(t,n)\geq\frac{1}{1+\epsilon}\rho_S(t,n),
\end{equation}
for all $n>t>t_0$. 

Note that for $n\in\N$,
\[
\rho_G(1,n)=\rho_G(1,t_0+1)+\rho_G(t_0+1,n).
\]
By \eqref{L5Eq8}, for $n$ large, we have
\[
\rho_G(1,n)\geq \rho_G(1,t_0+1)+\frac{1}{1+\epsilon}\rho_S(t_0+1,n),
\]
and a quick calculation shows that 
\[
\rho_S(t_0+1,n)=\log\frac{n^{\pi/2a}}{(t_0+1)^{\pi/2a}}.
\]
Putting the above together and setting $C_\epsilon:=\rho_G(1,t_0+1)-\frac{\pi}{2a(1+\epsilon)}\log(t_0+1)$, we obtain
\begin{equation}\label{L5Eq9}
	\rho_G(1,n)\geq C_\epsilon+\frac{\pi}{2a(1+\epsilon)}\log n,
\end{equation}
for all $n$ sufficiently large. Taking $\liminf$, as $n\to +\infty$, and then letting $\epsilon\to 0$ in \eqref{L5Eq9} gives
\[
	\liminf_{n\to +\infty}\frac{\rho_G(1,n)}{\log n} \geq \frac{\pi}{2a}.
\]
On the other hand, since $S\subset G$, the domain monotonicity property for the hyperbolic distance yields
\[
\limsup_{n\to +\infty}\frac{\rho_G(1,n)}{\log n}\leq \limsup_{n\to +\infty}\frac{\rho_S(1,n)}{\log n}=\limsup_{n\to +\infty}\frac{\log n^{\pi/2a}}{\log n}=\frac{\pi}{2a}.
\]
Hence
\[
\lim_{n\to +\infty}\frac{\rho_G(1,n)}{\log n} = \frac{\pi}{2a}.
\]
Using a conformal map, it is not hard to verify that $b(S)=\frac{\pi}{2a}$ when $G\in\mathcal{F}_1$ and $b(S)=\frac{1}{2}$ when $G\in\mathcal{F}_2$. By Lemma \ref{L4}, we may write $\rho_G(0,n)=\rho_G(0,1)+\rho_G(1,n)$, and \eqref{mainlimit} follows immediately. 
\end{proof}

\medskip

We are now ready to prove Theorems \ref{Theorem3} and \ref{Theorem4} .
\begin{proof}
Let $G\in\mathcal{F}$. According to \eqref{bergmanid}, it suffices to calculate
\[
\liminf_{R\to +\infty} \frac{\rho_G(0,C_R)}{\log R}.
\]
Since $S\subset G$, we have $b(G)\leq b(S)$. We will prove that the reverse inequality also holds. We begin by observing that \eqref{mainlimit} implies that 
\begin{equation}\label{limit2}
	\liminf_{R\to +\infty}\frac{\rho_G(0,R)}{\log R} \geq b(S).
\end{equation}
In order to see this, let $R_n\to +\infty$ be an arbitrary sequence and denote by $\floor{R_n}$ and $\{R_n\}$ the floor and fractional part of $R_n$, respectively. Then, for any $n\in\N$, we can write
\begin{equation}\label{Th2Eq1}
\frac{\rho_G(0,R_n)}{\log R_n}\geq\frac{\rho_G(0,\floor{R_n})}{\log \floor{R_n}}\cdot\frac{\log\floor{R_n}}{\log R_n}.
\end{equation}
Note that 
\[
\frac{\log\floor{R_n}}{\log R_n}=\frac{\log\floor{R_n}}{\log \floor{R_n}+\log\left(1+\{R_n\}/\floor{R_n}\right)}\to 1,
\]
as $n\to +\infty$. Taking liminf, as $n\to +\infty$, in \eqref{Th2Eq1} and using Lemma \ref{L5} gives \eqref{limit2}.

Next, we shall investigate the quantity $\rho_G(0,C_R)$ via a sequence of reflections across suitable lines. For each $R>0$, let $\gamma_R$ be a piecewise $C^1$ curve in $G$ joining $0$ to some point on $C_R$ so that $\rho_G(0,C_R)=\ell_G(\gamma_R)$. By the symmetry of $G$ with respect to $\R$ and Theorem \ref{TheoremB}, we may assume that $\gamma_R\subset\overline{\Ha}$. Note that by Theorem \ref{TheoremB} again, reflecting the subarcs of $\gamma_R$ which lie in $\{z:\ \Re z<0\}$ about the imaginary axis decreases their hyperbolic length. Thus we obtain a curve $\tilde{\gamma}_R$ which lies in $\{z: \Re z\geq 0,\ \Im z\geq 0\}$, joins $0$ to some point on $C_R$, and satisfies
\begin{equation}\label{Th2Eq2}
	\ell_G(\gamma_R)>\ell_G(\tilde{\gamma}_R).
\end{equation}
Let $L$ be the line $y=x$ and set $L^{-}=\{x+iy:\ y>x\}$ and $L^{+}=\{x+iy:\ y<x\}$. Observe that in all cases depending on the form of the domain $G$, we have $R(G^{-},L)\subset G^{+}$. Consequently, by Theorem \ref{TheoremB}, we have that reflecting the subarcs of $\tilde{\gamma}_R$ which lie in $L^{-}$ across $L$ decreases their hyperbolic length. Hence, we obtain a new curve $\sigma$ lying in $P:=\{x+iy:\ x\geq 0,\ 0\leq y\leq x\}$ which joins $0$ to some point $w_R\in C_R$ and satisfies
\begin{equation}\label{Th2Eq3}
	\ell_G(\tilde{\gamma}_R)>\ell_G(\sigma).
\end{equation}

\begin{figure}[t]
	\includegraphics[width=0.7\linewidth]{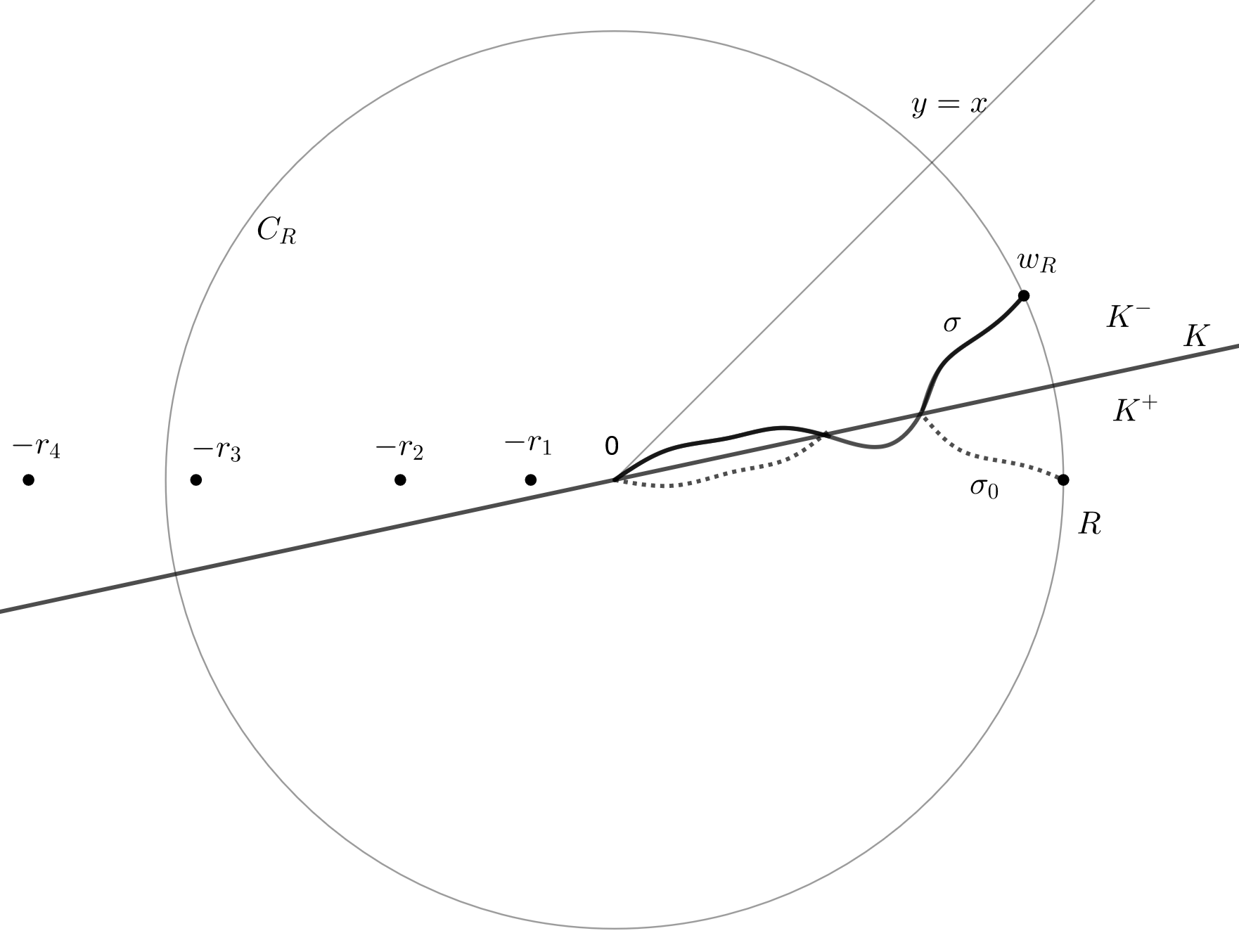}
	\caption{A domain which belongs to the family $\mathcal{F}_2$. The curve $\sigma$ lies in $P=\{x+iy:\ x\geq 0,\ 0\leq y\leq x\}$. The line $K$ is the perpendicular bisector of the segment $[w_R,R]$. The dotted arcs are the reflections of those arcs of $\sigma$ which lie in $K^{-}$. The curve $\sigma_0$ consists of the dotted arcs together with the arcs of $\sigma$ which lie in $\overline{K^{+}}$. It is contained in $\overline{K^+}\subset\overline{G^+}$.}
	\label{Fig3}	
\end{figure}

We now consider two cases depending on whether $G\in \mathcal{F}_1$ or $G\in \mathcal{F}_2$. \\
\textit{\underline{Case 1.}} Suppose $G\in \mathcal{F}_2$. The line $K$, which is the bisector of the chord of $C_R$ with endpoints $w_R$ and $R$ passes through $0$. Denote by $K^{-}$ the half plane which contains $\partial G$ and by $K^{+}$ the other one. Observe that the corresponding domains $G^{-}$ and $G^{+}$ with respect to $K$ satisfy $R(G^{-},K)\subset G^{+}$. See Figure \ref{Fig3}. By Theorem \ref{TheoremB}, we conclude that
reflection across the line $K$ decreases the hyperbolic length of any arc lying in $G^{-}$. Therefore, reflecting all such subarcs of $\sigma$ across $K$ yields a new curve $\sigma_0\subset \overline{G^{+}}$ which joins $0$ to $R$ and satisfies
\begin{equation}\label{Th2Eq4}
	\ell_G(\sigma)\geq \ell_G(\sigma_0).
\end{equation}
By Lemma \ref{L4}, since $\sigma_0$ and the segment $[0,R]$ are homotopic, we have that $\ell_G(\sigma_0)\geq \ell_G([0,R])=\rho_G(0,R)$. Then by \eqref{Th2Eq2}, \eqref{Th2Eq3}, and \eqref{Th2Eq4}, we obtain
\begin{equation}\label{Th2Eq5}
	\rho_G(0,C_R)\geq \rho_G(0,R).
\end{equation}
In view of \eqref{bergmanid} and \eqref{limit2}, \eqref{Th2Eq5} yields $b(G)\geq b(S)$. We now move on to the next case.\\
\textit{\underline{Case 2.}} Suppose now that $G\in\mathcal{F}_1$. Since $G_\pi\in\mathcal{F}_2$, we assume that $G=G_a$ for some $a\in (0,\pi)$. By \eqref{Th2Eq3}, we have established that $\rho_G(0,C_R)> \ell_G(\sigma)$ for some piecewise $C^1$ curve $\sigma\subset P$ which joins $0$ to a point $w_R\in C_R$. If $\floor{\Im w_R}>0$, we reflect the subarcs of $\sigma$ which lie in $\{x+iy: y>\floor{\Im w_R}\}$ across the line $y=\floor{\Im w_R}$. By Theorem \ref{TheoremB}, the resulting curve, $\sigma_1$, has smaller hyperbolic length than $\sigma$, and it joins $0$ to a point $w_{R}'$ which lies inside $D(0,R)$. If $\floor{\Im w_{R}'}>0$, we reflect again to decrease the hyperbolic length of the curve, and we repeat this process until we obtain a curve $\tilde{\sigma}$ which lies in $\{x+iy:\ y\in (0,1),\ x\geq 0\}$ and satisfies $\ell_G(\sigma)\geq \ell_G(\tilde{\sigma})$. This curve joins $0$ to a point $z_R:=x_R+iy_R$ which satisfies $x_R\geq \frac{R\sqrt{2}}{2}$ because the initial curve $\sigma$ was contained in $P$. Let $\tau_R$ be the vertical segment which joins the points $z_R$ and $\Re z_R$. See Figure \ref{Fig4}. By \eqref{upperlambda}, if $R$ is sufficiently large so that $z_R\in S_a$, we have
\begin{align*}
	\ell_G(\tau_R)=\int_{\tau_R}\lambda_G(w)|dw|&\lesssim \int_{\tau_R}\frac{|dw|}{\delta_G(w)}\\
	&\leq \int_{\tau_R}\frac{|dw|}{|w|\sin(a-\arg w)}\\
	&\lesssim \int_{\tau_R}\frac{|dw|}{|w|}\\
	& =\int_{0}^{y_R}\frac{dt}{|x_R+it|}\\
	&\leq \frac{1}{x_R}.
\end{align*}

\begin{figure}[t]
	\includegraphics[width=0.7\linewidth]{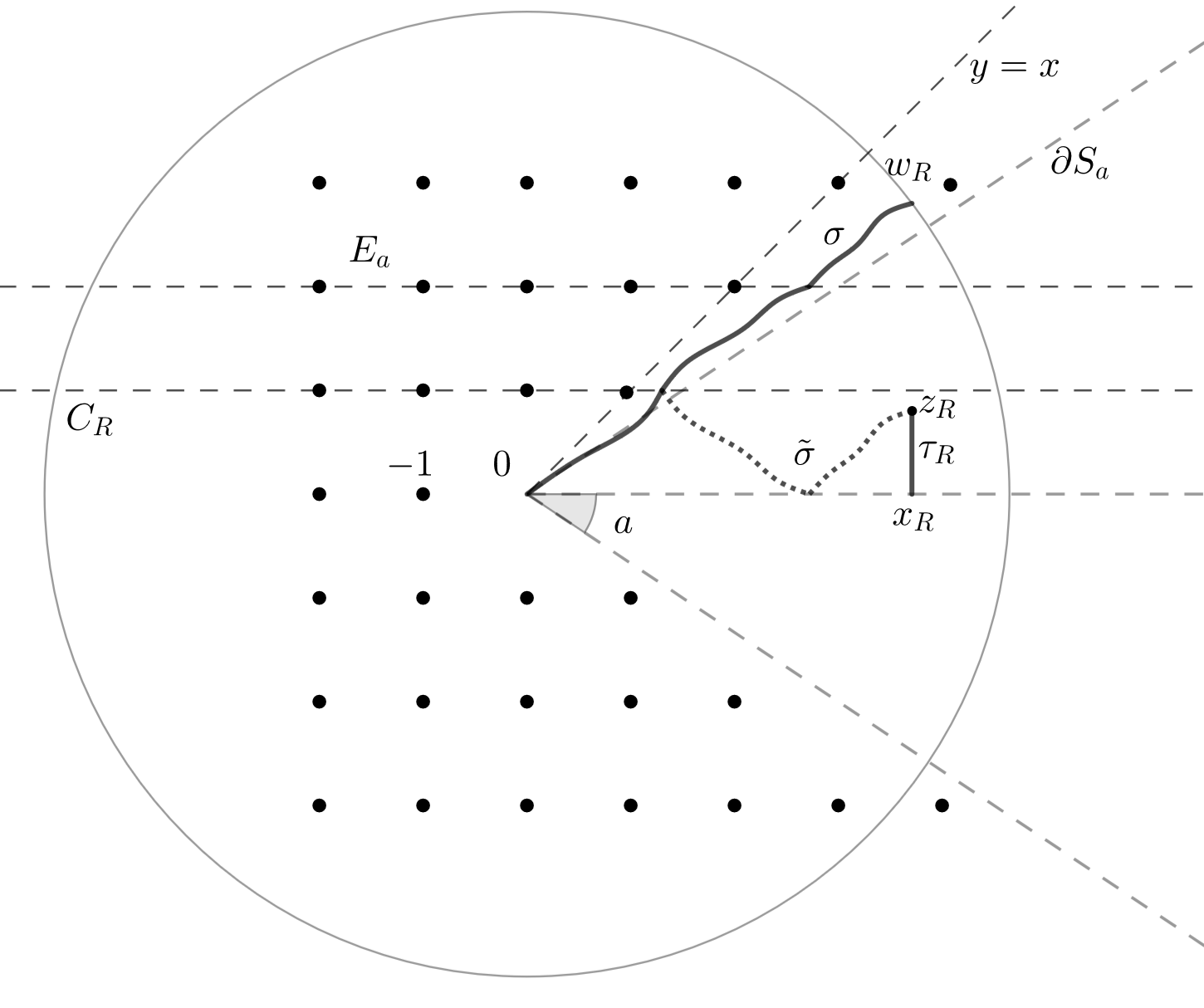}
	\caption{A domain $G_a$ of the family $\mathcal{F}_1$. The curve $\sigma$ lies in $P$. We reflect $\sigma\cap\{z:\ \Im z\in [2,3]\}$ about the line $y=2$. The arcs $\sigma\cap\{z: \Im z \leq 2\}$ together with this reflection are the trace of a new curve $\sigma_1$ which is not shown in the picture. We reflect $\sigma_1\cap\{z:\ \Im z\in [1,2]\}$ about the line $y=1$. The reflection is the dotted arc which lies in $\{z: \Im z\in [0,1]\}$. This arc together with $\sigma\cap\{z:\ \Im z\in [0,1]\}$ is the curve $\tilde{\sigma}$ which joins $0$ to the point $z_R$.}
	\label{Fig4}	
\end{figure}

Since $x_R\geq \frac{R\sqrt{2}}{2}$, it follows that $\ell_G(\tau_R)\to 0$, as $R\to +\infty$. Let $\tau$ be the curve we obtain by tracing $\tilde{\sigma}$ followed by $\tau_R$. This is a curve joining $0$ to $x_R$, and by Lemma \ref{L4}, we deduce that $\ell_G(\tau)>\rho_G(0,x_R)$. Putting the above together, we infer
\[
\rho_G(0,C_R)\geq \ell_G(\tilde{\sigma})= \ell_G(\tau)-\ell_G(\tau_R)>\rho_G\left(0,\frac{R\sqrt{2}}{2}\right)-\ell_G(\tau_R).
\]
Since $\ell_G(\tau_R)\to 0$, as $R\to +\infty$, it follows that
\begin{equation}\label{Th2Eq6}
\liminf_{R\to +\infty}\frac{\rho_G(0,C_R)}{\log R}\geq \liminf_{R\to +\infty}\frac{\rho_G\left(0,\frac{R\sqrt{2}}{2}\right)}{\log R}.
\end{equation}
Now write 
\[
\frac{\rho_G\left(0,R\sqrt{2}/2\right)}{\log R}=\frac{\rho_G\left(0,R\sqrt{2}/2\right)}{\log (R\sqrt{2}/2)}\cdot\frac{\log (R\sqrt{2}/2)}{\log R}
\]
and note that \eqref{limit2} and \eqref{Th2Eq6} yield
\begin{equation}\label{Th2Eq7}
\liminf_{R\to +\infty}\frac{\rho_G(0,C_R)}{\log R}\geq b(S).
\end{equation}
In view of \eqref{bergmanid}, \eqref{Th2Eq7} implies that $b(G)\geq b(S)$. We conclude that $b(G)=b(S)$. As we have noted before, using an appropriate conformal map shows that $b(S)=\frac{\pi}{2a}$ for $G\in \mathcal{F}_1$ and $b(S)=\frac{1}{2}$ for $G\in\mathcal{F}_2$. The desired results follow.
\end{proof}


\section{Applications to iteration theory}

In this section, we examine some of the implications of our results for iteration theory. Let $\phi$ be a self-map of the disk, i.e., $\phi\in \rm{Hol}(\D, \D)$.
Consider the sequence of functions $(\phi_n)$ defined by $\phi_0={\rm id}_{\D}$, $\phi_n=\phi\circ \phi_{n-1}$, $n=1,2,\dots$. It is 
 a consequence of the Schwarz Lemma that $\phi$ can have at most one fixed point in $\D$, and in this case this point is called the Denjoy-Wolff point of $\phi$. If no such point exists, then the Denjoy-Wolff theorem, \cite[Theorem 3.2.1]{Abate}, asserts that there exists a unique point $\tau\in\partial\D$ such that the iterates $\phi_n$ converge to $\tau$ locally uniformly in $\D$. In this case, $\tau$ is called the Denjoy-Wolff point of $\phi$, and it also holds that $\phi'(\tau)\in (0,1]$, where the derivative is an angular limit. This information can be used to classify the elements of $\rm{Hol}(\D, \D)$ as follows.\\
(i) elliptic: if $\tau\in\D$ and $\phi$ is not a hyperbolic rotation;\\
(ii) hyperbolic: if $\tau\in\partial\D$ and $\phi'(\tau)<1$;\\
(iii) parabolic: if $\tau\in\partial\D$ and $\phi'(\tau)=1$.

If $\phi$ is parabolic, it is known that either $\lim_{n} \rho_{\D} (z_n,z_{n+1})=0$, where $z_n=\phi_n(z_0)$ for some (equivalently for all) $z_0\in\D$ or $\lim_{n} \rho_{\D} (z_n,z_{n+1})>0$, where $z_n=\phi_n(z_0)$ for some (equivalently for all) $z_0\in\D$. In the first case, we say that $\phi$ is parabolic of zero hyperbolic step while in the latter case, we say that $\phi$ is parabolic of positive hyperbolic step. 

By Fatou's theorem, any $\phi\in\rm{Hol}(\D, \D)$ has non-tangential limits a.e. on $\partial\D$ with respect to the Lebesgue measure. One can therefore pose the natural question of whether $\phi$ satisfies the boundary version of the Denjoy-Wolff theorem, i.e., whether $\phi_n(\xi)\to \tau$, as $n\to +\infty$ for a.e. $\xi\in\partial\D$, where $\tau$ is the Denjoy-Wolff point of $\phi$. This question, along with a very nice exposition of the related theory, is studied in \cite{CDP}; see also \cite{BCD}. One of the main results discussed there is the following. Before we state it, recall that a self-map $\phi$ of the unit disk is called inner if $\phi$ has non-tangential limit of modulus $1$, a.e. on $\partial\D$. A sequence $\{z_n:\ n\in\N\}$ is called Blaschke if $\sum_n\left(1-|z_n|\right)<\infty$. A forward orbit of $\phi$ is a sequence $\phi_n(z_0)$ for some $z_0\in\D$.\\

\begin{customthm}{D}\label{TheoremD}
	\textit{ Let $\phi\in\rm{Hol}(\D, \D)$ be a self-map of $\D$ with Denjoy-Wolff point $\tau\in\overline{\D}$. The following assertions are true.\\
	{\rm (a)} \cite{BMS}, \cite{P-C} Suppose that $\phi$ is elliptic and not the identity. Then $\phi_n(\xi)\to \tau$ for a.e. $\xi\in\partial\D$ if and only if $\phi$ is not inner.\\
	{\rm (b)} \cite{BMS}, \cite{P-C} Suppose that $\phi$ is hyperbolic or parabolic of positive hyperbolic step. Then $\phi_n(\xi)\to \tau$ for a.e. $\xi\in\partial\D$.\\
	{\rm (c)} \cite{BMS}, \cite{P-C} Suppose that $\phi$ is an inner function which is also parabolic of zero hyperbolic step. Then $\phi_n(\xi)\to \tau$ for a.e. $\xi\in\partial\D$ if and only if some (equivalently all) forward orbit of $\phi$ is a Blaschke sequence.\\
	{\rm (d)} \cite{BCD} Suppose that $\phi$ is parabolic of zero hyperbolic step but is not an inner function. Then $\phi_n(\xi)\to \tau$ for a.e. $\xi\in\partial\D$.
	}
\end{customthm}

We now restrict our attention to domains $G\subsetneq\C$, symmetric about $\R$, containing the ray $[0,+\infty)$, and satisfying $G+1\subset G$. Such domains are necessarily hyperbolic. Therefore, there exists a unique universal covering map $\pi:\D\to G$ satisfying $\pi(0)=0$ and $\pi'(0)>0$. By the symmetry of $G$, we have that $\pi$ maps $[0,1)$ injectively onto $[0,+\infty)$. So, there is a point $\xi_1\in (0,1)$ such that $\pi(\xi_1)=1$. Moreover, by a lifting argument, there exists a unique holomorphic function $\phi:\D\to \D$ associated with $G$ such that $\phi(0)=\xi_1$ and 
\begin{equation}\label{funeq}
	\pi(\phi(z))=\pi(z)+1,\;\;\;z\in\D.
\end{equation}
  In the case that $\text{cap}(\C\setminus G)=0$, it follows that $\phi$ is inner. By \eqref{funeq}, we see that $\phi$ cannot be elliptic, and thus it is either hyperbolic or parabolic. It is also known that $\phi$ is parabolic of zero hyperbolic step if and only if $\dist(n,\partial G)\to +\infty$, as $n\to +\infty$. Furthermore, by the uniqueness of $\phi$ and the symmetry of $\pi$, we have that $\phi$ is symmetric with respect to $\R$. For these facts, see \cite[Theorems 8.1 and 8.2]{CDP} and references therein. 
  
  The above construction is of particular interest from the point of view of iteration theory because the orbits $(\phi_n(z))$, $z\in\D$, generated by $\phi$ correspond by $\pi$ to the translation sequences $(\pi(z)+n)$ in $G$. The dynamical properties of $\phi$  are hidden in the geometry of $G$, and it is an attractive problem to reveal them. 
  Regarding part (c) of Theorem \ref{TheoremD} above, the authors in \cite{CDP} show that within the class of inner self-maps of $\D$ which are parabolic of zero hyperbolic step, some maps satisfy the boundary version of the Denjoy-Wolff theorem, while others do not. They demonstrate this via the following examples.
  
  \medskip
  
\textbf{Example 1.}
 Let $E=\{m+in:\ m,n\in\Z,\ m<0\}$ and let $G=\C\setminus E$. Then $0\in G$ and $G+1\subset G$. Since $\lim_{n}\dist(n,\partial G)= +\infty$, it follows that the associated symmetric self-map $\phi$ of $\D$ is parabolic of zero hyperbolic step. Moreover, since $\text{cap}(E)=0$, we see that $\phi$ is inner. However, the authors proceed to show that $\sum_n\left(1-\phi_n(0)\right)=+\infty$. Hence, $\phi$ does not have Blaschke forward orbits, which by part (c) of Theorem \ref{TheoremD}, implies that $\phi$ does not satisfy the boundary version of the Denjoy-Wolff theorem.
 
 \medskip
 
 \textbf{Example 2.}
Let $E=\{m+in:\ m,n\in\Z,\ m<\alpha|n|\}$, for $\alpha\in (53,+\infty)$. Let $G=\C\setminus E$. Since $\lim_{n}\dist(n,\partial G)= +\infty$ we again have that the corresponding symmetric self-map $\phi$ is parabolic of zero hyperbolic step. Moreover, $\text{cap}(E)=0$, and thus $\phi$ is inner. However, as it turns out, in this case $\sum_n\left(1-\phi_n(0)\right)$ converges, which by part (c) of Theorem \ref{TheoremD}, implies that $\phi$ satisfies the boundary version of the Denjoy-Wolff theorem.

\medskip

These examples indicate that for $\alpha$ large ($\alpha\geq \pi/2$),
the corresponding $\phi$ does not satisfy the boundary version of the Denjoy-Wolff theorem. But if  $\alpha$ is small ($\alpha\leq \arctan(1/53)$), the opposite holds. 
Our results for the family $\mathcal{F}_1$ allow us to further study this class of examples. Observe that the domain $G$ in Example 1 is $G_{\pi/2}\in\mathcal{F}_1$ and the domains in Example 2 correspond to the domains $G_a\in\mathcal{F}_1$ for $a\in (0,\arctan(1/53))$.
The proposition below shows that the critical value of the angle $\alpha$ is
that of Example 1, namely $\pi/2$. 

\begin{proposition}
For each $a\in (0,\pi]$, let $\phi_a$ be the symmetric self-map of $\D$  associated with the domain $G_a$ of the family $\mathcal{F}_1$. Then $\phi_a$ is an inner, parabolic map of zero hyperbolic step, for all $a\in (0,\pi]$. Furthermore, $\phi_a$ satisfies the boundary version of the Denjoy-Wolff theorem if and only if $a\in(0,\pi/2)$.
\end{proposition}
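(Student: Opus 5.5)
The first part follows from the facts recalled before the examples. Each $G_a$ is symmetric about $\R$ and contains $[0,+\infty)$. We also have $G_a+1\subset G_a$: if $z+1\in E_a$ then $z\in E_a$, because $E_a$ is invariant under $z\mapsto z-1$ (the sector $\overline{S_a}$ is invariant under $z\mapsto z+1$). The set $E_a$ is countable, so $\text{cap}(E_a)=0$ and $\phi_a$ is inner. Finally, $\dist(n,\partial G_a)\ge n\sin a$ for $a\le\pi/2$, and $\dist(n,\partial G_a)\ge n+1$ otherwise, so this distance tends to $+\infty$ and $\phi_a$ is parabolic of zero hyperbolic step by \cite[Theorems 8.1, 8.2]{CDP}.

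By Theorem \ref{TheoremD}(c), the second part reduces to deciding when $\sum_n(1-\phi_n(0))<\infty$. I will express $1-\phi_n(0)$ through the hyperbolic distance. Since $\pi$ maps $[0,1)$ onto $[0,+\infty)$ with $\pi(\phi_n(0))=n$, the point $\phi_n(0)\in(0,1)$ is the lift of $n$. The segment $[0,\phi_n(0)]$ is a lift of $[0,n]$, so $\rho_{\D}(0,\phi_n(0))=\ell_{G_a}([0,n])=\rho_{G_a}(0,n)$ by Lemma \ref{L4}. From \eqref{hypdisk}, $\rho_{\D}(0,x)=\log\frac{1+x}{1-x}$, hence
\[
1-\phi_n(0)\asymp e^{-\rho_{G_a}(0,n)}.
\]
Lemma \ref{L5} gives $\rho_{G_a}(0,n)=(\tfrac{\pi}{2a}+o(1))\log n$, so $1-\phi_n(0)=n^{-\pi/(2a)+o(1)}$. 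If $a<\pi/2$, then $\pi/(2a)>1$; choosing $\epsilon$ small in Lemma \ref{L5} makes the series dominated by $\sum n^{-(1+\eta)}$ for some $\eta>0$, so it converges. If $a>\pi/2$, the exponent is below $1$ and the series diverges. This settles every case except the critical angle.

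The main obstacle is $a=\pi/2$, where Lemma \ref{L5} only gives $n^{-1+o(1)}$, which decides nothing. This case is exactly Example 1, $G_{\pi/2}=\C\setminus\{m+in: m<0\}$, for which \cite{CDP} shows $\sum_n(1-\phi_n(0))=+\infty$, so I will simply cite it. For a self-contained argument, I would instead use domain monotonicity $\rho_{G}(0,n)\le\rho_{S_{\pi/2}}(0,n)$ with the right half-plane, suitably shifted to contain $0$. Since $\{\Re z>-1\}\subset G_{\pi/2}$, we get $\rho_{G}(0,n)\le\log(n+1)$, hence $1-\phi_n(0)\gtrsim 1/(n+1)$ and the series diverges. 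The same comparison, with the sector $S_a$ translated so that it contains $0$, gives divergence for every $a\ge\pi/2$ directly, because $\rho_{S_a}(1,n)=\frac{\pi}{2a}\log n$ and $\pi/(2a)\le1$. I would therefore present divergence for all $a\ge\pi/2$ by this upper bound, and convergence for $a<\pi/2$ by the lower bound of Lemma \ref{L5}.
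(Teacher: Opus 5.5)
Your argument is correct and is essentially the paper's: capacity zero and $\dist(n,\partial G_a)\to+\infty$ give the first part, and then Theorem D(c) applies, with the lower bound \eqref{L5Eq9} giving convergence for $a<\pi/2$ and the comparison $S_a\subset G_a$ giving divergence for $a\ge\pi/2$ (the paper phrases this comparison through the lifted domain $V=q(S_a)\subset\D$, which amounts to the same thing). One small slip: for $a\in(\pi/2,\pi)$ with $\cot a$ irrational, no translate of $S_a$ containing $0$ lies in $G_a$, so compare at the base point $1\in S_a$ instead, via $\rho_{G_a}(0,n)\le\rho_{G_a}(0,1)+\rho_{S_a}(1,n)$, as the paper does with $\xi_1$.
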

\begin{proof}
Fix $a\in (0,\pi]$ and let $\phi=\phi_a$ be the corresponding self-map of $\D$ satisfying \eqref{funeq} and $\xi_1:=\phi(0)\in (0,1)$. Since $\C\setminus G_a$ has zero capacity,  $\phi$ is inner. Note that $\lim_{n}\dist(n,\partial G)=+\infty$, from which we infer that $\phi$ is a parabolic map of zero hyperbolic step. By the uniqueness of $\phi$ and the symmetry of $G$, it follows that $\xi_n:=\phi_n(0)\in (0,1)$ for all $n\in \N$. It follows from \eqref{funeq}  that $\pi(\xi_n)=\pi(\phi_n(0))=n$, for all $n\in\N$. 

 By \eqref{hypdisk} and an easy calculation in the unit disk,  we have for $n$ large enough
\begin{equation}\label{CorEq1}
e^{-\rho_{\D}(\xi_1,\xi_n)}=\frac{(1-\xi_n)(1-\xi_1)}{1-\xi_1\xi_n+\xi_n-\xi_1}\gtrsim 1-\xi_n.
\end{equation}
By \eqref{CorEq1} and \eqref{contraction}, we find
\begin{equation}\label{CorEq2}
\sum_n\left(1-\xi_n\right)\lesssim \sum_n e^{-\rho_{\D}(\xi_1,\xi_n)}\leq \sum_n e^{-\rho_G(\pi(\xi_1),\pi(\xi_n))}=\sum_n e^{-\rho_G(1, n)}.
\end{equation}
Moreover,  \eqref{L5Eq9} gives
\[
\sum_n e^{-\rho_G(1, n)}\lesssim \sum_n n^{-\frac{\pi}{2a(1+\epsilon)}},
\]
where the implied constant depends on $\epsilon$. The last estimate together with \eqref{CorEq2} shows that if $a\in (0,\pi/2)$, then the orbit $\xi_n$ is a Blaschke sequence. Hence, part (c) of Theorem \ref{TheoremD} implies that $\phi$ satisfies the boundary version of the Denjoy-Wolff theorem.

If $a\in [\pi/2,\pi]$, then by a direct calculation via a conformal map, it is not hard to see that
\begin{equation}\label{CorEq3}
\rho_{S_a}(1,n)=\log n^{\pi/2a}.
\end{equation}
Recall from the proof of Lemma \ref{L5} that there exists a univalent function $q:S_a\to\D$ such that $\pi(q(w))=w$, for all $w\in S_a$. We set $V=q(S_a)\subset\D$. Then $V$ is a simply connected domain in $\D$ which is symmetric with respect to $\R$ and $\pi$ is univalent on $V$. Furthermore, $q$ maps $(0,+\infty)$ onto $(0,1)$. Set $x_n=q^{-1}(\xi_n)\in (0,+\infty)$ and note that $x_n=\pi(q(x_n))=\pi(\xi_n)=n$, for all $n\in\N$. Then
\begin{align*}
\sum_n\left(1-\xi_n\right)\geq \sum_n e^{-\rho_{\D}(\xi_1,\xi_n)}&\geq \sum_n e^{-\rho_V(\xi_1,\xi_n)}\\
&=\sum_n e^{-\rho_{V}(q(x_1),q(x_n))}\\
&=\sum_n e^{-\rho_{S_a}(\pi(q(x_1)),\pi(q(x_n)))}\\
&=\sum_n e^{-\rho_{S_a}(x_1,x_n)}\\
&=\sum_n e^{-\rho_{S_a}(1,n)}\\
&=\sum_n n^{-\pi/2a}.
\end{align*}
The first inequality is evident from the identity in \eqref{CorEq1}. The second inequality holds by the domain monotonicity property because $V\subset\D$. The third-to-last equality follows because $\pi$ maps $V$ conformally onto $S_a$ and the second-to-last equality is a consequence of the functional equation $\pi(q(w))=w$, for all $w\in S_a$. The last equality follows from \eqref{CorEq3}. Observing that the last series diverges when $a\in [\pi/2,\pi]$ concludes the proof. 
\end{proof}


\end{document}